\numberwithin{equation}{section}
\newtheorem{theorem}{Theorem}[section]
\newtheorem{proposition}{Proposition}[section]
\newtheorem{corollary}{Corollary}[section]
\newtheorem{lemma}{Lemma}[section]
\theoremstyle{definition}
\newtheorem{definition}[theorem]{Definition}
\newtheorem{example}[theorem]{Example}
\theoremstyle{remark}
\newtheorem{remark}[theorem]{Remark}
\newcommand{\resmes}{\mathbin{\vrule height 1.6ex depth 0pt width
0.13ex\vrule height 0.13ex depth 0pt width 1.3ex}}
\DeclareMathOperator{\diam}{diam}
\def \Z {\mathbb{Z}}
\def \T {\mathbb{T}}
\def \R {\mathbb{R}}
\def \M {\mathcal{M}}
\def \P {\mathbb{P}}
\def \proj {\operatorname{proj}}
\DeclareMathOperator*{\supp}{supp}
\DeclareMathOperator*{\sign}{sign}
\begin{document}
\title[Subharmonic Kernels and Energy Minimizing Measures, with Applications to the Flat Torus]{Subharmonic Kernels and Energy Minimizing Measures, with Applications to the Flat Torus}

\author*[1]{\fnm{S.B.} \sur{Damelin}}\email{steve.damelin@gmail.com}

\author[2]{\fnm{Joel} \sur{Nathe}}\email{jnathe@purdue.edu}

\affil*[1]{\orgdiv{Department of Mathematics}, \orgname{Leibniz Institute for Information Infrastructure- FIZ Karlsruhe}, \orgaddress{\street{Franklinstr. 11}, \city{Berlin}, \postcode{10587}, \country{Germany}}}

\affil[2]{\orgdiv{Department of Mathematics}, \orgname{Purdue University}, \orgaddress{\street{150 N. University Street}, \city{West Lafayette}, \postcode{47907-2067}, \state{IN}, \country{USA}}}

%%==================================%%
%% Sample for unstructured abstract %%
%%==================================%%

\abstract{
    We study the minimization of the energy integral $I_K(\mu) = \int_{\Omega} \int_{\Omega} K(x,y) d\mu(x) d\mu(y)$ over all Borel probability measures $\mu$, where $(\Omega,\rho)$ is a compact connected metric space and $K:\Omega^2 \to [0,\infty]$ is continuous in the extended sense. We focus on kernels $K$ which are subharmonic, which we define 
    so that the potential $U_K^\mu(x) = \int_{\Omega} K(x,y) d\mu(y)$ satisfies a maximum principle on $\Omega \setminus \supp{\mu}$. This extends the classical electrostatics minimization problem for logarithmic energy $\int_{\Omega}\int_{\Omega}\log\left(\frac{1}{||x-y||}\right)$, which is used heavily as a tool in approximation theory.
    Using properties of minimizing measures, we show that if the singularities of the subharmonic kernel $K$ are such that $K$ is regular, then $K$ is positive definite, and $\mu$ is a minimizing measure if and only if its potential is constant (outside of a small exceptional set).
    \par
    We then apply this result to group invariant kernels on compact homogeneous manifolds. In this case, the uniform measure $\sigma$ has constant potential, so subharmonicity implies that this is the minimizing measure. Finally, we look at the case of the $d$-dimensional flat torus $\T^d$. We use our results to see that the Riesz kernel $K_s(x,y) = \sign(s)\rho(x,y)^{-s}$ is minimized by $\sigma$ (and thus positive definite) when $d > s \geq d-2$. Additionally, the positive definiteness gives us a condition which implies that the multivariate Fourier series of a function $f:[0,\pi]^d \to [0,\infty]$ has nonnegative coefficients.
}    
\keywords{Equilibrium measure, logarithmic kernel, Submean Value Property, Energy Minimization, Group Invariant Kernels}

\pacs[MSC Classification]{31B05,31C05, 31A15, 30C85, 47B34, 30C40, 43A85}

\maketitle

\section{Introduction}
In 1886, G. Robin \cite{Robin} proposed the following problem. For a compact $\Omega \subset \R^d$ ($d\geq1$), find a Borel probability measure $\mu$ supported on the boundary of $\Omega$ which generates a constant Newtonian potential $\int_{\Omega}K(x,y)d\mu(y)$, where $K:\Omega^2 \to [0,\infty]$ is the Newtonian kernel. 
It is known that such a measure $\mu$ also minimizes the energy integral
\[
\ \int_{\Omega}\int_\Omega K(x,y) d\mu(x)d\mu(y)
\]
among all Borel probability measures supported in $\Omega$. As we are in $\R^d$, We can write the Newtonian potential $K(x,y)$ as $K_s(x,y)$, where $K_s(x,y)$ is the Riesz kernel (see Example \ref{ex.RieszEx}) with $s = d-2$. It is also known that this minimizing measure $\mu$ exists as long as the energy integral above is finite for some measure.
\par
However, such an equivalence does not hold when $s \neq d-2$. When $d > s > d-2$, the support of the minimizing measure is not limited to $\partial \Omega$\textemdash in fact it must be all of $\Omega$. And when $s < d-2$, the potential of the minimizing measure may not be constant. 
\par
As an example, consider the (closed) unit disk in $\R^2$. Our Newtonian potential is $K_0(x,y) = \log \frac{1}{||x-y||}$, and the solution to the Robin problem is the measure uniform on the boundary $\mathbb{S}^1$. When $-2 < s < 0$, this remains the minimizing measure, but the potential is no longer constant. And when $0 < s < 2$, the minimizing measure is absolutely continuous with density $\rho(x) = C(1-||x||^2)^{s/2-1}$\cite{Landkof}.
\par
What is special about the Newtonian potential ($s=d-2$) is that it is harmonic outside of its support. In this paper, we study kernels that generate potentials which are subharmonic outside of their support, in the general setting of a compact connected metric space. We show that these kernels, which are analogous to the case $s \in [d-2,d)$, lead to the equivalence of constant potential and minimal energy. We also show an analogy to the potential's strict subharmonicity when $s > d-2$ forcing the minimizer to have full support.
\par
 We apply this to the flat torus with its geodesic metric and associated Riesz kernel, which is isometric to $(\R/2\pi \Z)^d$. The flat torus behaves similarly to a compact $\Omega \subset \R^d$, in the sense that for $d > s \geq d-2$, minimizing measures are those which have constant potential. And when $d > s > d-2$, all minimizing measures must have full support. We note that the uniform surface measure $\sigma$ satisfies these properties, and thus is a minimizer when $d > s \geq d-2$. The detailed statement of this result is Theorem \ref{thm.torrieszequil}.

\begin{remark}
The framework which we are developing in this paper allows for an extension of the classical  electrostatics logarithmic Newtonian energy problem for compact conductors. Let $\Omega\subset \mathbb C$ be a compact \emph{conductor} and consider the equilibrium distribution of unit charges of electrons on $\Omega$. Let the electrons repel each other under a logarithmic Newtonian (logarithmic inverse distance law). Equilibrium will be reached when the total logarithmic energy
\[
\int_{\Omega}\int_{\Omega}\log\left(\frac{1}{||x-y||}\right)d\mu(y)d\mu(x).
\]
is minimal among all charges.
Imagine now that the set $\Omega$ in the electrostatic problem above is a compact connected metric space and the logarithmic kernel is replaced by a kernel $K(x,y)$ which is symmetric ($K(x,y) = K(y,x)$) and continuous in the extended sense (see \ref{subsec.EngMinBackground}). Some examples of these spaces include the tori $\T^d$, spheres $\mathbb{S}^d$, ellipsoids, and other smooth manifolds.
Can one say something?

%There is a unique distribution (equilibrium %measure or minimizer) $\mu_{\Omega}$ supported %on $\Omega$ for which this minimal energy is %attained. $\mu_{\Omega}$ is supported on the %outer boundary of $\Omega$ and %its logarithmic %potential $\int \log\frac{1}{|x-%y|}d\mu_{\Omega}$ is %essentially constant on %$\Omega$.
 %Given the support of $\mu_{\Omega}$ is known and %the logarithmic potential of %$\mu_{\Omega}$ is %essentially constant on $\Omega$ allows one %using various %techniques to determine %$\mu_{\Omega}$ \cite{Deift,Landkof,LL,M}. 
 
In approximation theory, one uses the classical electrostatic principle for logarithmic energy  as a fundamental tool for the approximation of real-valued functions by polynomials, objects intrinsically related to 
logarithmic kernels. Examples include Bernstein's density theorem in weighted approximation and Restricted range inequalities. We believe our framework should allow for extensions of these results and others where one should approximate real valued functions on general $\Omega$ by approximants other than polynomials.
\end{remark}
Some extension of this approximation theory has been done by Mhaskar in \cite{M1}, which extends many concepts of potential theory, including pre-capacity, pre-transfinite diameter, and the pre-Chebyshev constant to the case of a locally compact separable Hausdorff space. Mhaskar also extends properties for the minimizing measure, including the bounds we reference in Proposition \ref{prop.frost}. We will extend subharmonicity, a specific property of the logarithmic energy,to a general case and show that these kernels have many desirable properties.

Our paper is also motivated by the work of Damelin and his collaborators in the papers \cite{D,D1,DLS,DLRS,DHRZ}, and the classical work of Mhaskar \cite{M,M1} and Lubinsky \cite{Lub19a}, as well as the work of Bilyk and collaborators in \cite{BD,BDM,BMN,BMO}. In the 
papers \cite{D,D1,DLS,DLRS,DHRZ} and \cite{BD,BDM,BMN,BMO}, the following problems were studied:

\begin{itemize}
\item[(a)] The celebrated Gaussian quadrature formula on finite intervals tells us that the Gauss nodes are the zeros of the unique solution of an extremal problem. Damelin, Grabner \cite{DG} and Damelin, Levesley, Ragozin and Sun \cite{DLS,DLRS} derived quadrature estimates on compact, homogeneous manifolds embedded in Euclidean spaces, via energy functionals associated with a class of group-invariant kernels which are generalizations of zonal kernels on the spheres or radial kernels in Euclidean spaces. These results apply, in particular, to weighted Riesz kernels defined on spheres and certain projective spaces. The energy functionals describe both uniform and perturbed uniform distribution of quadrature point sets. 

\item[(b)] Let now $\Omega$ be the orbit of a compact, possibly non Abelian group $G$ acting as measurable transformations of $\Omega$  and the kernel $K:\Omega^{2} \rightarrow \mathbb R$ is invariant under the group action. The results of Damelin, Hickernell, Ragozin and Zeng \cite{DHRZ} show that for a natural minimal energy problem as in the above,
the unique minimizer $\mu_K$ is the normalized measure on $\Omega$ induced by Haar measure on $G$ which allows for explicit representations of $\mu_K$.
\item[(c)] In the two-point homogeneous spaces (the spheres $\mathbb{S}^d$ and the projective spaces $\mathbb{RP}^d,\mathbb{CP}^d$,$\mathbb{HP}^d$, and $\mathbb{OP}^1$,$\mathbb{OP}^2$), 
there are cases in which the energy minimizers for Riesz kernels (see Example \ref{ex.RieszEx}) are known precisely. In general, when $s$ is small (very negative), these minimizers are discrete, and have been shown to be minimizers  using linear programming or discrepancy arguments \cite{BDM,BMN}. 
When $s$ is large, the uniform surface measure (Haar measure) is the minimizer, which is shown by expanding the Riesz kernel into a series of positive definite (see Definition \ref{def.posdef}) kernels \cite{BD,BMN}.
\end{itemize}

A pattern in these above results (a-c) is that there are many cases where a uniform measure is a minimizer, and that this corresponds to the positive definiteness of the kernel (see Definition \ref{def.posdef}). Generally, these use the positive definiteness to show that the uniform measure is a minimizer. We will instead use subharmonicity of the kernel to show that uniform measures are minimizers, and also use it to obtain positive definiteness.

\subsection{Energy minimization background}
\label{subsec.EngMinBackground}
From now on, let $(\Omega,\rho)$ be a compact connected metric space, and our kernel $K: \Omega \times \Omega \to [0,+\infty]$ be symmetric and continuous in the extended sense. The most commonly studied kernels of this type (as they are generalizations of the logarithmic and Newtonian kernels) are the Riesz kernels. 

\begin{example}[Riesz Kernel] \label{ex.RieszEx}
The Riesz kernels (named for M. Riesz) are those of the following form:
\begin{equation} \label{Rieszker}
K_{s}(x,y) = 
\begin{cases}
\sign(s)\rho(x,y)^{-s}, & s \ne 0,\\
-\log(\rho(x,y)), & s=0.
\end{cases}
\end{equation}
These are commonly defined for the Euclidean distance on $\R^d$(ex.\cite{Landkof}), but we will use them in the setting of a general metric space. 
\par
In order to ensure that these kernels are always nonnegative, we may add a constant to them. This does not change which measures are minimizers, and is always possible, since $\Omega$ is compact and therefore bounded.
\end{example}

We denote the set of finite signed Borel measures on $\Omega$ as $\M(\Omega)$. Within this, we denote the set of finite unsigned Borel measures as $\M^+(\Omega)$, and the set of Borel probability measures (which are unsigned) as $\P(\Omega)$. We also denote the restriction of a measure $\mu$ to a set $S \subseteq \Omega$ by $\mu \resmes S$, and the support of $\mu$ by $\supp{\mu}$. As a final piece of notation, we write the closed ball centered at $x$ of radius $r$ as $B(x,r)$.
\par
Given a measure $\mu \in \M(\Omega)$, we define its \emph{$K$-potential} (or just \emph{potential}) $U_K^\mu: \Omega \to \R \cup \{-\infty,\infty\}$ by
\[
U_K^\mu(x) = \int_{\Omega} K(x,y) d\mu(y).
\]
From an energy perspective, the function $K(\cdot,y)$ is the \emph{potential field} induced by a unit point charge placed at $y$. So $U_K^\mu$ is the potential field induced by the measure $\mu$. We can then view $U_K^\mu(x)$ as then the \emph{potential energy} of a unit test charge placed at $x$ under this field.
\par
Thus, for $\mu \in \M^+(\Omega)$, we define the measure's \emph{$K$-energy} as 
\[
I_K(\mu) = \int_{\Omega} U_K^\mu(x) d\mu(x) = \int_{\Omega} \int_{\Omega} K(x,y) d\mu(x) d\mu(y).
\]
When $\mu$ is signed, we use Jordan decomposition to write $\mu = \mu^+ - \mu^-$, and define
\[
I_K(\mu) = I_K(\mu^+)+I_K(\mu^-) - 2\int_{\Omega}\int_{\Omega} K(x,y) d\mu^+(x) d\mu^-(y)
\]
as long as at least one of $I_K(\mu^+)+I_K(\mu^-)$ or $\int_{\Omega}\int_{\Omega} K(x,y) d\mu^+(x) d\mu^-(y)$ is finite.
\par
%2x use of common problem, make sure Wiener constant is correct
The generalization of the electrostatics problem here is minimizing the $K$-energy over measures in $\P(\Omega)$.
% We call this minimal energy the Wiener constant, and denote it as $W_K(\Omega) \coloneqq \inf_{\mu \in \P(\Omega)} I_K(\mu)$. 
While we are more interested in finding the measures which are minimizers, the minimal energy itself is generally expressed as the \emph{$K$-capacity} of $\Omega$, which is defined as $c_K(\Omega) = \left(\min_{\mu \in \P(\Omega)} I_K(\mu)\right)^{-1}$. We note that capacity can be defined for any closed $F \subseteq \Omega$ by considering the metric space $(F,\rho)$. For other Borel sets $E$, we use the inner capacity $c_K(E) = \sup_{\substack{F \subseteq E \\ F \text{closed}}} c_K(F)$.
\par
If $c_K(\Omega) = 0$, then we have $I_K(\mu) = \infty$ for all $\mu \in \mathbb{P}(\Omega)$. As this makes minimizing the energy trivial, we assume in this paper that $c_K(\Omega) > 0$. Moreover, we assume $c(B(x,r)) > 0$ for all $r > 0$. This ensures that every nonempty open set has positive capacity, and is thus "included" in our minimization problem.
\par
Measures which achieve the minimum energy are called \emph{minimizing measures}. It is known that these measures exist and that their potentials have certain properties (see Section \ref{sec.equil}). One goal here, as before, is to explicitly determine or find properties of minimizing measures for a given kernel or class of kernels. We will show that for our class of subharmonic kernels, the minimizing measures must have constant potential (except possibly on an exceptional set).
\par
A important property for energy minimization is that of being positive definite.
\begin{definition}\label{def.posdef}
    We call a kernel $K$ \emph{positive definite} on $\Omega$ if
    \[
    I_K(\mu) \geq 0
    \]
    for any $\mu \in \M(\Omega)$ with $I_K(\mu)$ well-defined.
    We say that $K$ is \emph{strictly positive definite} if equality is only achieved when $\mu \equiv 0$.
    \par
    If the above inequality holds when we also require $\mu(\Omega)=0$, we say that $K$ is \emph{conditionally (strictly) positive definite} on $\Omega$. Clearly, (strict) positive definiteness implies (strict) conditional positive definiteness.
\end{definition}

 One example of the relationship between this concept and energy minimization is that if $K$ is strictly conditionally positive definite on $\Omega$, then the minimizing measure is unique (see, for example \cite[Ch.4]{BHS}).
Another application is on compact homogeneous manifolds with $G$-invariant kernels\textemdash see \S\ref{sec.torus}. In this case, positive definiteness (along with certain integrability conditions) implies that the normalized surface measure is the minimizer \cite{DLRS}.

One example of a strictly conditionally positive definite kernel is the Riesz kernel for $-2 \leq s < d$ on $\Omega \subset\mathbb{R}^d$ with the Euclidean distance \cite[Ch.4]{BHS}. Another is the Riesz kernel for $-1\leq s < d$ on the sphere $\mathbb{S}^d$\cite{BD}. We note that in both cases, the capacity of $\Omega$ is $0$ for $s \geq d$. 

\subsection{Subharmonic kernels and overview of the paper}

\par
We study a class of kernels that we call \emph{entirely subharmonic}. This property is based on the submean value inequality, with the mean defined with respect to a measure $\sigma \in \P(\Omega)$ which has full support.
\par
When the singularities of these entirely subharmonic kernels are sufficiently well-behaved (or non-existent), we show that any minimizing measure must have constant potential, outside of a small exceptional set. We define measures which have this property for a kernel $K$ as approximately $K$-invariant (Definition \ref{def.approxinvar}), and show that the reverse implication is also true when measures have finite energy. We also show that if the kernels are what we define as entirely strictly subharmonic (the submean value inequality is strict), then the minimizing measures must have full support.
\par
In Section \ref{sec.subharmonic}, we give our definition for subharmonicity of a function from $\Omega \to \R$, which is based on the submean value property, and show that the maximum principle applies to these functions. Finally, we give requirements for $U_K^\mu$ to be subharmonic on $\Omega \setminus \supp{\mu}$, a property which we call entirely subharmonic.
\par
In Section \ref{sec.subpotential} we work through difficulties involving the singularities at $K(x,x)$ affecting continuity at the boundary of $\supp \mu$. This section culminates with Theorem \ref{thm.firstmax}, the first maximum principle, which says that maximum of the potential must be achieved on the support. While this is known for Riesz kernels on $\R^d$(\cite{Landkof}), our version is set on the general metric measure space of $(\Omega,\rho,\sigma)$, for general subharmonic kernels.
\par
In Section \ref{sec.equil}, we apply our result from Section \ref{sec.subpotential} to minimizing measures. After introducing some well-known properties of minimizing measures, we prove the main theorem of the paper: For the class of kernels where Theorem \ref{thm.firstmax} applies, a measure is a minimizer if and only if it has constant potential (outside of a small exceptional set).
\par
In the last section, we apply our results from Section \ref{sec.equil} to the torus. We obtain new results which give a range of the parameter $s$ for which the uniform surface measure $\sigma$ is a minimizer for the Riesz energy. Additionally, we use the close relationship between minimization by the uniform surface measure $\sigma$, positive definiteness, and harmonic polynomials to generalize a result for the Fourier coefficients of convex functions.

\begin{remark}
Many of our results are significantly easier or more general in the case where the kernel $K$ is bounded. These cases are discussed in separate remarks or mentioned in the proofs of certain statements.
\end{remark}

\section{Subharmonic Kernels}
%Subharmonic external field sentence
\label{sec.subharmonic}
We define subharmonic functions on metric measure spaces, and from that define subharmonic kernels.  Our definitions are based on the submean value property, chosen in order to apply the maximum principle. As was mentioned in the introduction, the measure $\sigma \in \mathbb{P}(\Omega)$ that we use to take the mean is required to have full support. Our subharmonic kernels are then defined in order to have potentials which are subharmonic functions. 
\par
\begin{definition}\label{def.subharmonic}
    Let $U\subseteq\Omega$ be open. We say an upper semi-continuous function $u: U \to [-\infty,\infty)$ is \emph{subharmonic with radius $R$} (or just \emph{subharmonic}) on $U$ if there exists $R > 0$ such that for every $B(x,r) \subseteq U$ with $0 < r < R$,
    \[
        u(x) \leq \frac{1}{\sigma(B(x,r))}\int_{B(x,r)} u(y) d\sigma(y)
    \]
    We say that $u$ is \emph{strictly subharmonic} on $U$ if the above inequality is always strict.
\end{definition}
When $\Omega \subseteq \R^d$ with the Euclidean norm, we recover the usual subharmonic functions if we take $\sigma$ to be the normalized Lebesgue measure.
\par %Is this their intoduction-also, give author names
Harmonic and subharmonic functions based on the mean value property have previously been discussed in \cite{GG} and \cite{AGG}. The difference between our definition and that of \cite{AGG} is that while we require upper semi-continuity, we only require the submean value property on balls smaller than a certain radius.
\par
We note that the subharmonicity of a function is dependent on the choice of $\sigma$. While this means that our class of subharmonic functions is not intrinsic to the metric space, it allows $\sigma$ to be chosen in order to make certain functions subharmonic.
\par
We now prove the maximum principle in the usual fashion. We note that while it is a requirement of Definition \ref{def.subharmonic}, the proof does not require $R$ to be independent of $x$. This uniformity condition is instead an extra property of subharmonic potentials obtained from the proof of Theorem \ref{thm.ufsubharm}. 
\medskip
\begin{proposition}[Maximum Principle]\label{prop.max}
    Assume $u$ is subharmonic on an open connected $U \subseteq \Omega$. Then if $u$ achieves its maximum at any point in $U$, $u$ is constant on $U$.
\end{proposition}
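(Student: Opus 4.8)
The plan is to run the classical connectedness argument. Set $M = \sup_{U} u$; this is finite and attained, by hypothesis. Let $A = \{x \in U : u(x) = M\}$. Since $U$ is connected, it suffices to show that $A$ is nonempty, relatively closed in $U$, and open; then $A = U$, i.e. $u \equiv M$ on $U$. Nonemptiness is the assumption that the maximum is achieved. Relative closedness is immediate from upper semi-continuity: if $x_n \to x$ with $x_n \in A$, then $M = \limsup_n u(x_n) \le u(x) \le M$, so $u(x) = M$ and $x \in A$.

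The substance is the openness of $A$. Fix $x \in A$. Since $U$ is open, choose $r$ with $0 < r < R$ and $B(x,r) \subseteq U$. Note $\sigma(B(x,r)) > 0$, because $B(x,r)$ contains the nonempty open ball $B^{\circ}(x,r) = \{y : \rho(x,y) < r\}$ and $\sigma$ has full support. The submean value inequality of Definition \ref{def.subharmonic} gives $M = u(x) \le \frac{1}{\sigma(B(x,r))}\int_{B(x,r)} u \, d\sigma$, while $u \le M$ on $U \supseteq B(x,r)$ gives the reverse inequality; hence $\int_{B(x,r)} (M - u)\, d\sigma = 0$. Since $M - u$ is nonnegative and Borel measurable (upper semi-continuous functions are Borel), this forces $u = M$ $\sigma$-almost everywhere on $B(x,r)$.

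The one place where I expect to have to be careful — and the only step that uses both upper semi-continuity of $u$ and the full support of $\sigma$ — is upgrading this almost-everywhere equality to an everywhere equality on a neighborhood of $x$. Suppose, toward a contradiction, that $u(z) < M$ for some $z \in B^{\circ}(x,r)$. By upper semi-continuity there is an open set $V$ with $z \in V \subseteq B^{\circ}(x,r) \subseteq B(x,r)$ on which $u < \tfrac{1}{2}\bigl(u(z)+M\bigr)$. Full support of $\sigma$ and nonemptiness of the open set $V$ give $\sigma(V) > 0$, whence $\int_{B(x,r)}(M-u)\,d\sigma \ge \sigma(V)\cdot \tfrac{1}{2}\bigl(M - u(z)\bigr) > 0$, contradicting the previous paragraph. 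Therefore $u \equiv M$ on the open ball $B^{\circ}(x,r)$, which is an open neighborhood of $x$ contained in $A$. Hence $A$ is open, and connectedness of $U$ finishes the proof.
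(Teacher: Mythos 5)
Your proposal is correct and follows essentially the same route as the paper's proof: show the set $\{u = M\}$ is relatively closed by upper semi-continuity and open via the submean value inequality together with full support of $\sigma$, then conclude by connectedness. The only difference is that you spell out the upgrade from $\sigma$-a.e.\ equality to everywhere equality on the ball, a step the paper states more briefly.
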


\begin{proof}
    Let $M = \sup_{x \in U} u(x)$. If $M$ is not finite this is trivial. Otherwise, we show that $V = \{x \in U| u(x) = M\}$ must be both open and closed in $U$. Since $u$ is upper semi-continuous, $U \setminus V = U \cap \{x \in U | u(x) < M\}$ is open, so $V$ is closed in $U$.
    \par
    Now consider $x_0 \in V$. Since $u$ is subharmonic and $U$ is open, there exists $0 < r < R$ such that $B(x_0,r) \subseteq U$ and 
    \begin{align*}
   M = u(x_0) &\leq \frac{1}{\sigma(B(x_0,r))} \int_{B(x_0,r)} u(y) d\sigma(y)\\
   0 &\leq \frac{1}{\sigma(B(x_0,r))}\int_{B(x_0,r)} u(y)-M d\sigma(y)\\
    \end{align*}
    Since $u$ is upper semi-continuous and $\sigma$ has full support, this implies that $u(y) = M$ for all $y \in B(x_0,r)$. Thus, $V$ is open in $U$.
    \par
    Since $U$ is connected and $V$ is both closed and open, $V = \emptyset$ or $V = U$. So if $u$ achieves its maximum on $u$, $V = U$, and $u$ is constant on $U$.
\end{proof}
In fact, if $u$ is upper semi-continuous on all of $\Omega$, it must achieve its maximum off of $U$. 
\medskip
\begin{corollary}\label{cor.max}
    If $u:\Omega \to [0,\infty]$ is upper semi-continuous on $\Omega$ and subharmonic on an open $U \subsetneq \Omega$, then $u$ achieves its maximum at some point $y \notin U$.
\end{corollary}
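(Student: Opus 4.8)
The plan is to argue by contradiction, recycling the ``open and closed'' dichotomy from the proof of Proposition \ref{prop.max}, but now relative to all of $\Omega$ rather than to $U$.

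First I would observe that since $\Omega$ is compact and $u$ is upper semi-continuous, $u$ attains its maximum value $M \in [0,\infty]$: the superlevel sets $\{x \in \Omega : u(x) \ge t\}$ are closed, and the nested family $\{u \ge M - 1/n\}$ (or $\{u \ge n\}$ if $M = \infty$) has the finite intersection property, so its intersection is nonempty and any point in it realizes $M$. If $M = +\infty$, then since the restriction $u|_U$ takes values in $[0,\infty)$ by Definition \ref{def.subharmonic}, the maximizing point already lies in $\Omega \setminus U$ and we are done; so I assume $M < \infty$ from here on.

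Now suppose, for contradiction, that $u(y) < M$ for every $y \in \Omega \setminus U$, and set $V = \{x \in \Omega : u(x) = M\}$. Then $V$ is nonempty (the maximum is attained), $V$ is closed in $\Omega$ (upper semi-continuity), and by the contradiction hypothesis $V \subseteq U$. To see $V$ is also open in $\Omega$, fix $x_0 \in V$; since $x_0 \in U$ and $u$ is subharmonic on $U$, there is $0 < r < R$ with $B(x_0,r) \subseteq U$ and $M = u(x_0) \le \frac{1}{\sigma(B(x_0,r))}\int_{B(x_0,r)} u \, d\sigma$. As $u \le M$ on all of $\Omega$, the integrand $u - M$ is $\le 0$ with nonnegative integral, so $u = M$ holds $\sigma$-almost everywhere on $B(x_0,r)$; because $\sigma$ has full support and $u$ is upper semi-continuous, this upgrades to $u \equiv M$ on the whole ball $B(x_0,r)$ (if $u(z) < M$ for some $z$ in the ball, upper semi-continuity would produce a neighborhood of $z$ on which $u < M$, a set of positive $\sigma$-measure meeting $B(x_0,r)$ — contradiction). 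Hence the open ball about $x_0$ lies in $V$, so $V$ is open. Since $\Omega$ is connected and $V$ is a nonempty clopen subset, $V = \Omega$; but $V \subseteq U \subsetneq \Omega$, a contradiction. Therefore $u$ must attain $M$ at some point of $\Omega \setminus U$.

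The only genuinely delicate points are (i) disposing of the case $M = +\infty$, which is painless once one notes that subharmonic functions are finite-valued on $U$ by definition, and (ii) promoting ``$u = M$ $\sigma$-a.e.\ on the ball'' to ``$u \equiv M$ on the ball'', which is precisely where full support of $\sigma$ together with upper semi-continuity of $u$ enters — the same step already carried out in the proof of Proposition \ref{prop.max}. I would deliberately avoid trying to deduce the result by applying Proposition \ref{prop.max} to a connected component of $U$ that contains a maximizer, since in a general metric space such a component need not be open.
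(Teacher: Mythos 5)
Your proof is correct, but it takes a genuinely different route from the paper's. The paper argues by dichotomy: if the maximum level set $E$ is not contained in $U$ we are done, and otherwise it passes (``without loss of generality'') to a connected component of $U$ containing a maximizer and invokes Proposition \ref{prop.max} there, concluding that this component is clopen and hence all of $\Omega$, a contradiction. You instead run the open-and-closed argument directly in $\Omega$ on the global maximum level set $V=\{u=M\}$, using the connectedness of $\Omega$ rather than of a piece of $U$. What your route buys is exactly the point you flag: Proposition \ref{prop.max} requires an \emph{open} connected set, and in a general compact connected metric space a connected component of an open $U$ need not be open, so the paper's reduction quietly assumes something your argument does not need; your version also treats the value $M=+\infty$ explicitly, via the finiteness of subharmonic functions built into Definition \ref{def.subharmonic}, whereas the paper does not address it. What the paper's route buys is brevity, since it reuses Proposition \ref{prop.max} wholesale instead of repeating the averaging step. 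One small imprecision on your side: from ``$u=M$ $\sigma$-a.e.\ on $B(x_0,r)$'' you can only conclude $u\equiv M$ on the \emph{open} ball (for a point $z$ with $\rho(z,x_0)=r$, a neighborhood $W$ of $z$ with $u<M$ need not meet $B(x_0,r)$ in a set of positive $\sigma$-measure), but since the open ball already gives an open neighborhood of $x_0$ inside $V$, your conclusion that $V$ is open stands; note the proof of Proposition \ref{prop.max} in the paper glosses over the same boundary point.
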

\begin{proof}
     Let $M = \max_{x \in U} u(x)$, and consider $E = \{y\in\Omega| u(y) = M\}$. Since $u$ is upper semi-continuous and $\Omega$ compact, this is closed and nonempty. If $E \not\subseteq U$, we are done. 
    \par
    Otherwise, $u$ achieves its maximum on $U$. Without loss of generality, let $U$ be connected, since we can choose a connected component of $U$ where $u$ achieves its maximum. In this case, $E = U$. Thus, $U$ is both closed and open. This implies that $U = \Omega$, a contradiction.
\end{proof}
Note that if $u$ is strictly subharmonic, then $u$ cannot be constant on $U \neq \emptyset$, and thus cannot achieve its maximum on $U$. 
We are now ready to define our subharmonic kernels.
\medskip
\begin{definition}
    Let $U \subseteq \Omega$ be open and $F \subseteq \Omega$ be closed. Then a kernel $K$ is (strictly) subharmonic on $U \times F$ if there exists $R > 0$ such that for each $y \in F$, $K(\cdot,y)$ is (strictly) subharmonic on $U$. That is, $K(\cdot,y):U \to [-\infty,\infty)$ is upper semi-continuous and for all $B(x_0,r) \subseteq U$ ($0 < r < R$),
    \[
    K(x_0,y) \leq \frac{1}{\sigma(B(x_0,r))} \int_{B(x_0,r)} K(z,y) d\sigma(z)
    \]
\end{definition}
We note that this definition requires the same choice of $R$ for all $y \in F$. This uniformity is necessary in order to ensure our kernels have subharmonic potentials. This is our novel idea, which we will use to attack the energy minimization problem.
\medskip
\begin{theorem}\label{thm.ufsubharm}
    If $K$ is (strictly) subharmonic on $U \times F$ and $0 \not\equiv \mu \in \mathcal{M}^+(\Omega)$ with $\supp{\mu} \subseteq F$, then $U_K^\mu$ is continuous, finite, and (strictly) subharmonic on $U$.
\end{theorem}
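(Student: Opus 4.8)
The plan is to establish the three conclusions in the order finiteness, continuity, subharmonicity, with only the last step genuinely using the subharmonicity hypothesis; the first two rest on $K$ being continuous in the extended sense, so that its only infinite values sit on the diagonal (these are the singularities $K(x,x)$ addressed in Section \ref{sec.subpotential}).

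\emph{Finiteness and continuity.} Fix $x_0 \in U$ and, using that $U$ is open, choose $\delta > 0$ with $B(x_0,\delta) \subseteq U$. In the setting of interest $U$ is disjoint from $\supp\mu$ (for instance $U = \Omega \setminus \supp\mu$), so $B(x_0,\delta) \times \supp\mu$ is a compact subset of $\Omega^2$ avoiding the diagonal; on it $K$ is finite and continuous, hence bounded by some $M < \infty$. (When $K$ is finite everywhere this step needs no disjointness.) Then $U_K^\mu(x) = \int_{\supp\mu} K(x,y)\, d\mu(y) \le M\,\mu(\Omega) < \infty$ for all $x \in B(x_0,\delta)$, which gives finiteness; and if $x_n \to x_0$ in $B(x_0,\delta)$, then $K(x_n,y) \to K(x_0,y)$ for every $y \in \supp\mu$ and $K(x_n,y) \le M$, so dominated convergence yields $U_K^\mu(x_n) \to U_K^\mu(x_0)$. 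Thus $U_K^\mu$ is finite and continuous on $U$.

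\emph{Subharmonicity and uniformity of the radius.} Let $R$ be the radius from the hypothesis that $K$ is subharmonic on $U \times F$, and fix any $B(x_0,r) \subseteq U$ with $0 < r < R$. By definition, for \emph{every} $y \in F \supseteq \supp\mu$,
\[
K(x_0,y) \le \frac{1}{\sigma(B(x_0,r))} \int_{B(x_0,r)} K(z,y)\, d\sigma(z).
\]
Integrating this pointwise inequality against $d\mu(y)$ and swapping the order of integration by Tonelli (all integrands nonnegative and measurable) gives
\[
U_K^\mu(x_0) \le \frac{1}{\sigma(B(x_0,r))} \int_{B(x_0,r)} U_K^\mu(z)\, d\sigma(z),
\]
whose right-hand side is finite since $U_K^\mu$ is locally bounded on $U$ by the previous step. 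The same $R$ works for all such balls because the hypothesis supplies one radius valid for all $y \in F$ — this is precisely the role of the uniformity built into the definition of a subharmonic kernel. Combined with the continuity (hence upper semicontinuity) and finiteness already shown, this makes $U_K^\mu$ subharmonic on $U$ with radius $R$ in the sense of Definition \ref{def.subharmonic}.

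\emph{Strict case and main obstacle.} If $K$ is strictly subharmonic, the displayed inequality for $K$ is strict for every $y \in F$; since $\mu$ is a nonzero positive measure on $F$, the gap $h_y := \frac{1}{\sigma(B(x_0,r))}\int_{B(x_0,r)} K(z,y)\,d\sigma(z) - K(x_0,y)$ is strictly positive everywhere, so the sets $\{y : h_y > 1/n\}$ exhaust $F$ and one of them has positive $\mu$-measure; hence $\int_F h_y\, d\mu(y) > 0$ and the inequality for $U_K^\mu$ becomes strict. The one point requiring care is the interaction of the diagonal singularities of $K$ with $U$: finiteness and the dominated-convergence step need the portion of $U$ in play to meet $\supp\mu$ only away from the singular set, which is automatic when $U = \Omega \setminus \supp\mu$ and vacuous for finite $K$. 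This is the same phenomenon handled more carefully, at the boundary of the support, in Section \ref{sec.subpotential}; apart from it, the proof is just Tonelli plus the uniformity of $R$.
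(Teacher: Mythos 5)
Your proposal is correct and follows essentially the same route as the paper: boundedness of $K$ on the compact set $B(x_0,r)\times F$ (resp.\ $B(x_0,r)\times\supp\mu$) plus bounded/dominated convergence gives finiteness and continuity, Tonelli applied to the pointwise submean inequality gives subharmonicity with the same uniform radius $R$, and strictness follows from the pointwise strict inequality together with $\mu\not\equiv 0$. The only difference is cosmetic: where you justify boundedness by taking $U$ disjoint from $\supp\mu$ (or $K$ finite), the paper simply asserts that $K$ is finite and continuous on $B(x_0,r)\times F$, implicitly using that subharmonicity of $K(\cdot,y)$ on $U$ entails finiteness there, so your extra care does not change the argument.
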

%Subharmonicity proof
\begin{proof}
    We first prove continuity, remembering that we assume $K$ is continuous in the extended sense.\par
    For any $x_0 \in U$, consider $\emptyset \neq B(x_0,r) \subseteq U$. Then $K$ is continuous and bounded on the compact set $B(x_0,r) \times F$. Since $\mu(\Omega) <\infty$, by bounded convergence,
    \[
    \lim_{x\to x_0}U_K^\mu(x) = \lim_{x\to x_0} \int_{F} K(x,y) d\mu(y) = \int_{F} \lim_{x\to x_0} K(x,y) d\mu(y) = U_K^\mu(x_0).
    \]
    So $U_K^\mu$ is continuous and finite on $U$.
    \par
    Now for the submean value inequality, consider $B(x_0,r) \subseteq U$ with $0 < r < R$. Since $K$ is nonnegative, we can rearrange integrals to obtain 
    \begin{align*}
        \frac{1}{\sigma(B(x_0,r))}\int_{B(x_0,r)} U_K^\mu(x) d\sigma(x) &= \frac{1}{\sigma(B(x_0,r))}\int_{B(x_0,r)}  \int_{\Omega} K(x,y) d\mu(y) d\sigma(x)\\
        &= \int_\Omega \frac{1}{\sigma(B(x_0,r))} \int_{B(x_0,r)} K(x,y) d\sigma(x) d\mu(y)\\
        &\geq \int_{\Omega} K(x_0,y) d\mu(y) = U_K^\mu(x_0)
    \end{align*}
    So $U_K^\mu$ is subharmonic with radius $R$. And the inequality becomes strict for all $y$ when $K$ is strictly subharmonic, so in that case $U_K^\mu$ is strictly subharmonic as long as $\mu \not \equiv 0$.
\end{proof}
The above theorem requires $\supp{\mu} \subseteq F$. When we consider energy minimization across all $\mu \in \P(\Omega)$, this will not be true in general. What we want is kernels for which we can choose $F = \supp{\mu}$ and $U = \Omega \setminus \supp{\mu}$ for any $\mu$.
\medskip
\begin{definition}
    A kernel $K:\Omega \times \Omega \to [0,\infty]$ is \emph{entirely subharmonic} if there exists $R > 0$ such that for all $y \in \Omega$, $x \mapsto K(x,y)$ is finite and subharmonic with radius at least $R$ on $\Omega \setminus \{y\}$.
    \medskip
\end{definition}
It is clear that for all $\mu \in \mathcal{M}^+(\Omega)$, an entirely subharmonic $K$ is subharmonic on $(\Omega \setminus \supp{\mu}) \times \supp{\mu}$. From now on, if we are not referring to a specific $U \times F$, we may drop the \emph{entirely} and just call these kernels \emph{subharmonic}.

\section{Potentials for Subharmonic Kernels}\label{sec.subpotential}
    We wish to apply Corollary \ref{cor.max} to the potentials of our entirely subharmonic kernels. However, when $K$ is not finite (which is possible at $K(x,x)$), we cannot be sure that $U_K^\mu$ is upper semi-continuous on $\Omega$. But for some of these kernels (in particular Riesz kernels), we show that for any $\mu \in \P(\Omega)$, we can apply Corollary \ref{cor.max} to approximations of $\mu$, and then take the limit to achieve our results.
    \par
    We start with a basic property for all potentials, which is a simple consequence of Fatou's lemma.
    \medskip
    \begin{lemma}
        If $\mu \in \mathcal{M}^+(\Omega)$, then $U_K^\mu$ is lower semi-continuous.
    \end{lemma}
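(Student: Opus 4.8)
The plan is to use the sequential characterization of lower semi-continuity, which is available because $(\Omega,\rho)$ is a metric space, together with Fatou's lemma for the nonnegative integrand $K$. First I would fix a point $x_0 \in \Omega$ and an arbitrary sequence $x_n \to x_0$ in $\Omega$, and aim to show $\liminf_{n\to\infty} U_K^\mu(x_n) \geq U_K^\mu(x_0)$; since $\Omega$ is metric, verifying this for all sequences at all points is equivalent to lower semi-continuity of $U_K^\mu$ on $\Omega$.

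The key observation is that since $K$ is continuous in the extended sense, for each fixed $y \in \Omega$ the slice $x \mapsto K(x,y)$ is continuous as a map into $[0,\infty]$, so $K(x_n,y) \to K(x_0,y)$ and in particular $\liminf_n K(x_n,y) = K(x_0,y)$. Each function $y \mapsto K(x_n,y)$ is measurable (being continuous) and nonnegative, and $\mu$ is a finite measure since $\Omega$ is compact, so Fatou's lemma applies:
\[
\liminf_{n\to\infty} U_K^\mu(x_n) = \liminf_{n\to\infty} \int_\Omega K(x_n,y)\, d\mu(y) \;\geq\; \int_\Omega \liminf_{n\to\infty} K(x_n,y)\, d\mu(y) = \int_\Omega K(x_0,y)\, d\mu(y) = U_K^\mu(x_0).
\]
This bound holds for every sequence $x_n \to x_0$ and every $x_0$, and is unaffected by whether $U_K^\mu(x_0)$ is finite or $+\infty$, which gives the claim.

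There is essentially no serious obstacle here; the only points needing a word of care are (i) that ``continuous in the extended sense'' delivers pointwise convergence of the slices into $[0,\infty]$, which is exactly what justifies the last equality inside the integral, and (ii) that Fatou's lemma is being invoked in its form for nonnegative $[0,\infty]$-valued measurable functions against a finite measure, so no integrability hypothesis on $K$ and no restriction on the diagonal singularity is required — which is why this lemma, unlike several of the later results, holds for every entirely subharmonic kernel without qualification.
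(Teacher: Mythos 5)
Your proof is correct and is exactly the argument the paper intends: the paper states the lemma as ``a simple consequence of Fatou's lemma,'' and your sequential lower semi-continuity check via Fatou for nonnegative extended-valued integrands, using continuity of the slices $x \mapsto K(x,y)$, is that same standard argument carried out in full.
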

    \par
    We now shift focus to the following class of kernels, which allow our approximations.
    \medskip
    \begin{definition}
        A kernel $K:\Omega \times \Omega \to [0,\infty]$ is \emph{regular} if for any $\mu \in \M^+(\Omega)$ with $U_K^\mu$ continuous and finite on $\supp \mu$, $U_K^\mu$ is continuous and finite on $\Omega$.
    \end{definition}
    Aside from the upcoming argument, regularity is important for certain convergence results \cite[Ch.6\S2]{Landkof}. We are most interested in Riesz kernels, which are known to be in this class. We provide a proof.
    \medskip
    \begin{proposition}
        The Riesz kernels $K_s$ are regular.
    \end{proposition}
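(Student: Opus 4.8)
This is a metric-space version of the classical continuity principle (the Evans--Vasilesco theorem) for Riesz potentials, and the plan is to adapt its proof. I would first dispose of the trivial range: when $s<0$ the kernel $K_s(x,y)=-\rho(x,y)^{-s}$ (together with the constant making it nonnegative) is bounded and continuous on the compact set $\Omega\times\Omega$, so by bounded convergence $U_{K_s}^{\mu}$ is continuous and finite on all of $\Omega$ for every finite $\mu$, and regularity is immediate. So I may assume $s\ge 0$, the case in which the only singularity of $K_s$ is along the diagonal.

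Fix $\mu\in\M^+(\Omega)$ with $U_{K_s}^{\mu}$ continuous and finite on $S:=\supp\mu$. Off $S$, continuity and finiteness of $U_{K_s}^{\mu}$ are clear (the kernel is bounded and continuous near any $x_0\notin S$), and by the preceding lemma $U_{K_s}^{\mu}$ is lower semi-continuous everywhere; so it suffices to prove $\limsup_{x\to x_0}U_{K_s}^{\mu}(x)\le U_{K_s}^{\mu}(x_0)<\infty$ for each $x_0\in S$. Note first that $\mu(\{x_0\})=0$, since otherwise $U_{K_s}^{\mu}(x_0)$ would contain the term $\mu(\{x_0\})K_s(x_0,x_0)=\infty$, contradicting finiteness on $S$. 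Given a small $\delta>0$, I would split $\mu=\mu'+\mu''$ with $\mu'=\mu\resmes B(x_0,\delta)$. Then $U_{K_s}^{\mu''}$ is continuous and finite on $B(x_0,\delta/2)$ (the kernel is bounded there), while $U_{K_s}^{\mu'}(x_0)=\int_{B(x_0,\delta)}K_s(x_0,y)\,d\mu(y)\le U_{K_s}^{\mu}(x_0)<\infty$ and, together with $\mu(B(x_0,\delta))$, tends to $0$ as $\delta\to 0$ (tail of a convergent integral, and $\mu(\{x_0\})=0$).

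The geometric heart of the argument is a comparison with a nearby support point. For $x$ near $x_0$, let $z=z(x)\in S$ be a nearest point of $S$ to $x$ (it exists since $S$ is compact), so that $\rho(x,z)=\operatorname{dist}(x,S)\le\rho(x,x_0)$ and hence $\rho(z,y)\le\rho(z,x)+\rho(x,y)\le 2\rho(x,y)$ for every $y\in S\supseteq\supp\mu'$. Consequently $K_s(x,y)\le 2^s K_s(z,y)$ when $s>0$ (and $K_s(x,y)\le K_s(z,y)+\log 2$ when $s=0$, after adding the nonnegativizing constant), so integrating against $\mu'$ gives $U_{K_s}^{\mu'}(x)\le 2^s U_{K_s}^{\mu'}(z)$, respectively $U_{K_s}^{\mu'}(x)\le U_{K_s}^{\mu'}(z)+\mu'(\Omega)\log 2$. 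Since $\rho(z,x_0)\le 2\rho(x,x_0)\to 0$ and $z\in S$, one has $U_{K_s}^{\mu'}(z)=U_{K_s}^{\mu}(z)-U_{K_s}^{\mu''}(z)\to U_{K_s}^{\mu}(x_0)-U_{K_s}^{\mu''}(x_0)=U_{K_s}^{\mu'}(x_0)$ by continuity of $U_{K_s}^{\mu}$ on $S$ at $x_0$ and of $U_{K_s}^{\mu''}$ near $x_0$. Passing to the $\limsup$ as $x\to x_0$ and then letting $\delta\to 0$ forces $\limsup_{x\to x_0}U_{K_s}^{\mu'}(x)\to 0$; adding back the continuous piece $U_{K_s}^{\mu''}$ yields $\limsup_{x\to x_0}U_{K_s}^{\mu}(x)\le U_{K_s}^{\mu}(x_0)$, as required, and finiteness follows as well.

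The main obstacle is exactly that the pointwise comparison only produces $U_{K_s}^{\mu'}(x)\le 2^s U_{K_s}^{\mu'}(z)$ with a multiplicative constant $2^s>1$, which is useless by itself for upper semi-continuity; it becomes harmless only after the localization $\mu=\mu'+\mu''$, which uses the hypothesis $U_{K_s}^{\mu}(x_0)<\infty$ to make the mass of $\mu'$ near $x_0$, and with it $U_{K_s}^{\mu'}(x_0)$, arbitrarily small. Some care is also needed because the comparison point $z=z(x)$ moves with $x$, so the convergence $U_{K_s}^{\mu'}(z(x))\to U_{K_s}^{\mu'}(x_0)$ must be routed through the globally defined potentials $U_{K_s}^{\mu}$ and $U_{K_s}^{\mu''}$ rather than argued directly.
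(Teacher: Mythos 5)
Your proposal is correct and follows essentially the same argument as the paper: split $\mu$ into the part $\mu'$ near $x_0$ and the remainder, use the nearest support point $z$ with $\rho(z,y)\le 2\rho(x,y)$ to get the comparison $K_s(x,y)\le 2^sK_s(z,y)$ (additive $\log 2$ for $s=0$), and let the localization radius shrink using finiteness and continuity of $U_{K_s}^{\mu}$ on $\supp\mu$. The only differences are cosmetic (a $\limsup$/semicontinuity formulation instead of the paper's $\epsilon$-bookkeeping, and controlling $\mu'(\Omega)$ via $\mu(\{x_0\})=0$ rather than the paper's energy bound in the $s=0$ case), and both are fine.
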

    
    \begin{proof}
        Consider $x_0 \in \Omega$. If $x_0 \notin \supp{\mu}$, then there exists $B(x_0,r) \subseteq \Omega \setminus \supp \mu$. Since $K_s$ is bounded on $B(x_0,r) \times \supp \mu$, we can use the argument in the proof of Theorem \ref{thm.ufsubharm} to show that $U_K^\mu$ is continuous and finite at $x_0$.
        \par
        Thus, we only need to consider $x_0 \in \supp{\mu}$. Define $\mu_r = \mu \resmes B(x_0,r)$, and $\nu_r = \mu - \mu_r$. Then for any $x \in \Omega$,
        \[
            U_K^\mu(x) = U_K^{\mu_r}(x) + U_K^{\nu_r}(x)
        \]
        and
        \[
        |U_K^\mu(x) - U_K^\mu(x_0)| \leq U_K^{\mu_r}(x) + U_K^{\mu_r}(x_0) + |U_K^{\nu_r}(x) - U_K^{\nu_r}(x_0)|
        \]
        We note that $U_K^{\nu_r}$ is continuous at $x_0$ (since $x_0 \notin \supp{\nu_r}$). And since $\mu_r = \mu - \nu_r$, $U_K^{\mu_r}$ is continuous on $\supp{\mu}$ at $x_0$.
        \par
        We now bound $U_K^{\mu_r}$ off its support by its values on the support. Let $\epsilon > 0$ be arbitrary.
        \par
        Since $U_{K_s}^\mu(x_0) < \infty$ and continuous on $\supp \mu$, we can choose $r > \delta > 0$ such that $U_{K_s}^{\mu_r}(x_0) < \epsilon$, $U_{K_s}^{\mu_r}(x) < 2\epsilon$ for $x \in \supp_\mu \cap B(x_0,\delta)$, and $|U_{K_s}^{\nu_r}(x) - U_{K_s}^{\nu_r}(x_0)|< \epsilon$ for all $x \in B(x_0,\delta)$.
        \par
        % Since $K$ is locally Riesz equivalent, we can choose $\delta_1$ such that
        % \[
        % c_1K_s(x,y) \leq K(x,y) \leq c_2K_s(x,y)
        % \]
        % for all $x,y \in B(x_0,\delta_1)$. 
        % And since $U_K^\mu(x_0) < \infty$, there exists $r = \delta_2 < \delta_1$ such that $U_K^{\mu_r}(x_0) < \epsilon$. By continuity, there exists $\delta_3 > 0$ such that $U_K^{\mu_r}(x) < 2\epsilon$ for all $x \in \supp{\mu} \cap B(x_0,\delta_3)$. Finally, choose $\delta_4$ such that $|U_K^{\nu_r}(x)-U_K^{\nu_r}(x_0)| \leq \epsilon$ for all $x \in B(x_0,\delta_4)$. Now let $\delta = \min\{\delta_1,\delta_2,\delta_3,\delta_4\}$ and consider $x \in B(x_0,\delta)$. 
        % \par
        Consider $x \in B(x_0,\delta)$, and let $x'$ be a point in $\supp{\mu}$ closest to $x$. 
        For all $y \in \supp{\mu}$, $\rho(x',y) \leq \rho(x,x') + \rho(x,y) \leq 2\rho(x,y)$. So for $s > 0$, $K_s(x,y)\leq 2^s K_s(x',y)$. Thus,
        \begin{align*}
        U_K^{\mu_r}(x) &= \int_{B(x_0,r)} K(x,y) d\mu(y)\\
        % &\leq \int_{B(x_0,r)} c_2K_s(x,y)d\mu(y)\\
        % &\leq \int_{B(x_0,r)} 2^s c_2K_s(x',y) d\mu(y)\\
        &\leq \int_{B(x_0,r)} 2^s K(x',y) d\mu(y)\\
        %&= 2^{s}\frac{c_2}{c_1} U_K^{\mu_r}(x') \\
        &\leq 2^{s+1} \epsilon.\\
        \end{align*}
        Then
        \[
            |U_{K_s}^\mu(x) - U_{K_s}^\mu(x_0)| \leq 2^{s+1} \epsilon + \epsilon + \epsilon.
        \]
        When $s = 0$, we note that
        \[
        I_{K_0}(\mu) \geq I_{K_0}(\mu_r) \geq -\log (2r) \mu_r(\Omega)^2,
        \]
        so we can take our $r$ small enough so that $\mu_r(\Omega) < \epsilon$. Then $K_0(x,y) \leq \log 2 + K_0(x',y)$, so
        \begin{align*}
            U_K^{\mu_r}(x) &= \int_{B(x_0,r)} K(x,y) d\mu(y)\\
            %&\leq \int_{B(x_0,r)} c_2 K_0(x,y) d\mu(y)\\
            % &\leq \int_{B(x_0,r)} c_2 K_0(x',y) d\mu(y) + c_2 \mu_r(\Omega) \log 2\\
            &\leq \int_{B(x_0,r)} K(x',y) d\mu(y) + \epsilon\log 2\\
            &\leq 2\epsilon +\epsilon \log 2.
        \end{align*}
        Then 
        \[
        |U_{K_0}^\mu(x) - U_{K_0}^\mu(x_0)| \leq 2 \epsilon + \epsilon \log 2 + \epsilon + \epsilon.
        \]
        Since $\epsilon$ was arbitrary, we have shown that $U_{K_s}^\mu$ is continuous in both cases.
    \end{proof}
    Next, we use Lusin's theorem to show that for any $\mu$, there is an approximation to $\mu$ which is continuous on its support, and thus continuous on $\Omega$.
    \medskip
    \begin{proposition}
        Assume $K$ is regular and $U_K^\mu$ is finite. Then for any $\epsilon > 0$, there exists a compact set $F \subseteq \supp{\mu}$ such that $\mu' = \mu \resmes F$ has $U_K^{\mu'}$ is continuous on $\Omega$ and $\mu'(\Omega) = \mu(F) > 1-\epsilon$.
    \end{proposition}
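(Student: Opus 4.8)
The plan is to produce $F$ by applying Lusin's theorem to the potential $U_K^\mu$ itself, to deduce that $U_K^{\mu'}$ is continuous on $F$ by a semicontinuity sandwich, and then to invoke regularity of $K$ to upgrade this to continuity on all of $\Omega$.

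First I would recall that, by the lemma above, $U_K^\mu$ is lower semi-continuous on $\Omega$, hence Borel measurable, and it is finite everywhere by hypothesis. Since $\Omega$ is a compact metric space and $\mu$ a finite Borel measure, $\mu$ is Radon, so Lusin's theorem yields a compact set $F_0 \subseteq \Omega$ with $\mu(\Omega \setminus F_0) < \epsilon$ such that the restriction $U_K^\mu\big|_{F_0}$ is continuous. Intersecting with the closed set $\supp\mu$ does not decrease the $\mu$-measure (as $\mu(\Omega\setminus\supp\mu)=0$), so I may replace $F_0$ by $F := F_0 \cap \supp\mu$; then $F$ is compact, $F \subseteq \supp\mu$, $\mu(F) > 1-\epsilon$, and $U_K^\mu\big|_F$ is still continuous. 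Put $\mu' := \mu\resmes F$ and $\nu := \mu - \mu' = \mu\resmes(\Omega\setminus F)$, so that $U_K^\mu = U_K^{\mu'} + U_K^\nu$ pointwise, all three potentials being nonnegative.

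The key step is to check that $U_K^{\mu'}$ is continuous and finite on $F$; regularity then finishes the proof. Finiteness is clear, since $0 \le U_K^{\mu'} \le U_K^\mu < \infty$ and likewise $0 \le U_K^\nu \le U_K^\mu < \infty$, so on $F$ the identity $U_K^{\mu'} = U_K^\mu - U_K^\nu$ never involves $\infty-\infty$. For continuity, note that by the lemma both $U_K^{\mu'}$ and $U_K^\nu$ are lower semi-continuous on $\Omega$; restricting to $F$, the function $U_K^\nu\big|_F$ is lower semi-continuous, hence $-U_K^\nu\big|_F$ is upper semi-continuous, and adding the continuous function $U_K^\mu\big|_F$ shows that $U_K^{\mu'}\big|_F = U_K^\mu\big|_F - U_K^\nu\big|_F$ is upper semi-continuous on $F$. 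Since $U_K^{\mu'}\big|_F$ is also lower semi-continuous, it is continuous on $F$. As $\supp\mu' \subseteq F$, we conclude that $U_K^{\mu'}$ is continuous and finite on $\supp\mu'$; applying regularity of $K$ to $\mu' \in \M^+(\Omega)$ gives that $U_K^{\mu'}$ is continuous and finite on all of $\Omega$. Finally $\mu'(\Omega) = \mu(F) > 1-\epsilon$, so $F$ has the asserted properties. (When $\epsilon \ge 1$ there is nothing to prove; for $\epsilon < 1$ we have $\mu(F) > 0$, so $\mu'\not\equiv 0$.)

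I expect the only genuinely delicate point to be this middle step: the upper semicontinuity of $U_K^{\mu'}$ on $F$ is exactly what is not automatic, since potentials are in general only lower semi-continuous, and it is supplied precisely by writing $U_K^{\mu'}$ on $F$ as the Lusin-continuous $U_K^\mu$ minus the lower semi-continuous tail $U_K^\nu$ — a subtraction legitimized by the hypothesis $U_K^\mu<\infty$.
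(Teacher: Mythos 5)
Your proposal is correct and follows essentially the same route as the paper: apply Lusin's theorem to the lower semi-continuous, finite potential $U_K^\mu$, write $U_K^{\mu'} = U_K^\mu - U_K^{\mu-\mu'}$ on $F$ to get upper semi-continuity there, combine with lower semi-continuity to obtain continuity on $\supp\mu'$, and then invoke regularity of $K$. Your version is in fact slightly more careful than the paper's in two small spots (intersecting the Lusin set with $\supp\mu$, and restricting the semicontinuity argument explicitly to $F$ where the subtraction is legitimate), but the argument is the same.
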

    \begin{proof}
        Since $\mu$ is a Borel probability measure on a compact metric space, by Lusin's theorem, there exists compact $F \subseteq \supp{\mu}$ such that $U_K^\mu$ is continuous on $F$ and $\mu(\supp{\mu} \setminus F) < \epsilon$. So we need to show that $U_K^{\mu'}$ is continuous on $F$. But we have
        \[
            U_K^{\mu'}(x) = U_K^\mu(x) + (-U_K^{\mu-\mu'}(x))
        \]
        which is the sum of two upper semi-continuous functions, and is thus upper semi-continuous. And we know $U_K^{\mu'}$ is lower semicontinuous. Thus, $U_K^{\mu'}$ is continuous on $F = \supp{\mu'}$. So by the previous proposition, $U_K^{\mu'}$ is continuous on $\Omega$.
    \end{proof}
    \medskip
    Finally, we construct a monotonically increasing sequence from these approximations.
    \medskip
    \begin{proposition}\label{prop.incmeasure}
        If $K$ is regular and $U_K^\mu$ is finite on $\Omega$, there exists an increasing sequence of measures $(\mu_n)_{n=1}^\infty$ such that $U_K^{\mu_n}$ is continuous on $\Omega$ and $\lim_{n\to\infty} U_K^{\mu_n}(x) = U_K^\mu(x)$ for all $x \in \Omega$.
    \end{proposition}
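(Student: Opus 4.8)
The plan is to realize each $\mu_n$ as the restriction $\mu\resmes F_n$ of $\mu$ to a nested sequence of compact sets $F_1\subseteq F_2\subseteq\cdots\subseteq\supp\mu$ that exhausts $\mu$, chosen so that the two preceding propositions apply to every $\mu\resmes F_n$ simultaneously; the monotone convergence theorem will then handle the limit.

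First I would invoke Lusin's theorem. Since $U_K^\mu$ is finite and lower semi-continuous (hence Borel, hence $\mu$-measurable), for each $n$ there is a compact $E_n\subseteq\supp\mu$ with $\mu(\supp\mu\setminus E_n)<1/n$ on which $U_K^\mu$ is continuous. Set $F_n=E_1\cup\cdots\cup E_n$. These are compact, increasing in $n$, satisfy $\mu(\supp\mu\setminus F_n)\le\mu(\supp\mu\setminus E_n)<1/n$, and $U_K^\mu$ is continuous on each $F_n$, because a function that is continuous when restricted to each of finitely many closed sets is continuous on their union (argue with closed preimages).

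Next I would put $\mu_n=\mu\resmes F_n$. Since the $F_n$ increase, $\mu_n\le\mu_{n+1}$, so $(\mu_n)$ is an increasing sequence of measures; and $U_K^{\mu_n}\le U_K^\mu$ is finite everywhere. On $F_n\supseteq\supp\mu_n$ I would write $U_K^{\mu_n}=U_K^\mu+\bigl(-U_K^{\mu-\mu_n}\bigr)$, where the first term is continuous on $F_n$ and the second is upper semi-continuous on $\Omega$ because $U_K^{\mu-\mu_n}$ is the potential of the positive measure $\mu-\mu_n$. Hence $U_K^{\mu_n}$ is upper semi-continuous on $F_n$; being also lower semi-continuous it is continuous on $\supp\mu_n$, and regularity of $K$ upgrades this to continuity on all of $\Omega$. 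This is exactly the argument of the preceding proposition, now applied to the nested sets $F_n$.

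Finally I would verify the pointwise limit. Because the $F_n$ increase with $\mu(\supp\mu\setminus F_n)\to0$ and $\mu(\Omega\setminus\supp\mu)=0$, the set $\Omega\setminus\bigcup_n F_n$ is $\mu$-null; so for each fixed $x$ the nonnegative integrands $y\mapsto K(x,y)\mathbbm{1}_{F_n}(y)$ increase $\mu$-almost everywhere to $K(x,y)$, and monotone convergence gives $U_K^{\mu_n}(x)=\int K(x,y)\mathbbm{1}_{F_n}(y)\,d\mu(y)\nearrow U_K^\mu(x)$. The one genuine obstacle is the second step: a direct appeal to the previous proposition only yields compact sets that need not be nested, so one must instead build the nested family from Lusin's theorem by hand and re-run the upper-semicontinuity-plus-regularity bootstrap; everything else is routine measure theory.
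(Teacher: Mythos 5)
Your proof is correct, and it rests on the same toolkit as the paper---Lusin's theorem, the decomposition $U_K^{\mu_n} = U_K^\mu - U_K^{\mu-\mu_n}$ into a continuous plus an upper semi-continuous part, regularity of $K$ to upgrade continuity on $\supp \mu_n$ to continuity on $\Omega$, and monotone convergence for the pointwise limit---but the construction of the increasing sequence is genuinely different. The paper uses the preceding proposition as a black box: it applies it inductively to the residual measures $\mu-\mu_{n-1}$, obtaining pieces $\nu_n$ with continuous potentials, and sets $\mu_n=\nu_1+\cdots+\nu_n$, so continuity of $U_K^{\mu_n}$ is immediate as a finite sum of continuous potentials; the cost is that monotonicity and exhaustion of $\mu$ have to be read off from the telescoping masses. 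You instead build a nested family $F_n=E_1\cup\cdots\cup E_n$ of Lusin sets by hand and take $\mu_n=\mu\resmes F_n$, which makes monotonicity and the monotone-convergence step completely transparent, but forces you to re-run the semicontinuity-plus-regularity bootstrap on each $F_n$ (which you do correctly, the pasting lemma for finitely many closed sets supplying continuity of $U_K^\mu$ on the union). You also correctly identified the obstacle---a direct appeal to the previous proposition gives Lusin sets that need not be nested---which is exactly what the paper's residual-measure trick is designed to sidestep. Both routes are valid: yours gives the cleaner description $\mu_n=\mu\resmes F_n$ of the approximants, while the paper's avoids repeating the continuity argument.
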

    \begin{proof}
        We set $\nu_1 = 0$, and use the previous proposition to define $\nu_n$ inductively. For each $n > 1$, we choose $F_n \subseteq \supp (\mu - \nu_{n-1})$ with $(\mu-\nu_{n-1})(F_n) > (\mu-\nu_{n-1})(\Omega)-\frac{1}{n}$ such that $\nu_n = (\mu-\nu_{n-1}) \resmes F_n$ has $U_K^{\nu_n}$ continuous.
        \par
        If we set $F_1 = \emptyset$, we see that each $K_n$ is disjoint, and we have $\nu_n = \mu \resmes F_n$. 
        Then let 
        \[
        \mu_n = \sum_{k=1}^n \nu_k.
        \]
        Since $U_K^{\mu_n}$ is a finite sum of continuous potentials, it is continuous. And $\mu_n$ is clearly increasing to $\mu$. So we must have $\lim_{n\to\infty} U_K^{\mu_n}(x) = U_K^{\mu}(x)$.
    \end{proof}
    With these approximations, we are ready to prove the first maximum principle for any $\mu \in \M^+(\Omega)$.
    \medskip
    
    \begin{theorem}[First Maximum Principle]\label{thm.firstmax}
        Assume $K$ is regular and entirely subharmonic, and $\mu \in \M^+(\Omega)$. Then if $U_K^\mu(x) \leq M$ on $\supp{\mu}$, $U_K^\mu(x) \leq M$ on $\Omega$.
    \end{theorem}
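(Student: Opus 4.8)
The plan is to reduce the statement to the continuous approximating measures supplied by Proposition~\ref{prop.incmeasure} and to apply the maximum principle there. First I would clear away trivialities: if $M=\infty$ there is nothing to prove, and since $K\ge 0$ forces $U_K^\mu\ge 0$ we may assume $0\le M<\infty$; if $\mu\equiv 0$ then $U_K^\mu\equiv 0\le M$. After rescaling $\mu$ (and $M$ correspondingly) we may take $\mu\in\P(\Omega)$, which is the setting in which the preceding propositions were stated.

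The next step is to observe that $U_K^\mu$ is in fact finite on all of $\Omega$, which is what lets us invoke Proposition~\ref{prop.incmeasure}. On $\supp\mu$ this is immediate from the hypothesis $U_K^\mu\le M<\infty$. For $x_0\notin\supp\mu$ choose $r>0$ with $B(x_0,r)\cap\supp\mu=\emptyset$; then $K$ is finite on the compact set $B(x_0,r)\times\supp\mu$ (being entirely subharmonic, $K$ is finite off the diagonal), hence bounded there by continuity, so $U_K^\mu(x_0)=\int_{\supp\mu}K(x_0,y)\,d\mu(y)<\infty$. This is exactly the estimate already used in the proof that the Riesz kernels are regular.

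Now apply Proposition~\ref{prop.incmeasure}: there is an increasing sequence $(\mu_n)_{n=1}^\infty$ with $\mu_n\le\mu$, each $U_K^{\mu_n}$ continuous on $\Omega$, and $U_K^{\mu_n}(x)\to U_K^\mu(x)$ pointwise. For each $n$ with $\mu_n\not\equiv 0$: since $\mu_n\le\mu$ and $K\ge 0$ we have $U_K^{\mu_n}\le U_K^\mu\le M$ on $\supp\mu_n\subseteq\supp\mu$; the function $U_K^{\mu_n}$ is nonnegative and continuous, hence upper semi-continuous, on $\Omega$; and by Theorem~\ref{thm.ufsubharm} applied with $F=\supp\mu_n$ and $U=\Omega\setminus\supp\mu_n$ (an entirely subharmonic $K$ is subharmonic on $(\Omega\setminus\supp\mu_n)\times\supp\mu_n$) it is subharmonic on $\Omega\setminus\supp\mu_n$. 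If $\supp\mu_n=\Omega$ the desired bound on $\Omega$ is already in hand; otherwise Corollary~\ref{cor.max} forces $U_K^{\mu_n}$ to attain its maximum over $\Omega$ at some point of $\supp\mu_n$, where it is $\le M$. In all cases $U_K^{\mu_n}\le M$ on $\Omega$ (trivially so when $\mu_n\equiv 0$), and letting $n\to\infty$ gives $U_K^\mu\le M$ on $\Omega$.

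I expect the only genuine obstacle to be the one the preceding section was built to handle: because $K$ may be infinite on the diagonal, $U_K^\mu$ need not be upper semi-continuous on $\Omega$, so Corollary~\ref{cor.max} cannot be applied to $\mu$ directly; the continuous measures $\mu_n$ are precisely the device that makes the maximum principle usable, and the remaining ingredients (the finiteness reduction and the monotone passage to the limit) are routine. When $K$ is finite everywhere, $U_K^\mu$ is continuous on $\Omega$ by bounded convergence, and one may skip the approximation and apply Corollary~\ref{cor.max} to $\mu$ itself.
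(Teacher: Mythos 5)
Your proposal is correct and follows essentially the same route as the paper: reduce to the case where $U_K^\mu$ is finite on $\Omega$, approximate $\mu$ by the measures of Proposition~\ref{prop.incmeasure} whose potentials are continuous, apply Corollary~\ref{cor.max} on $\Omega\setminus\supp\mu_n$ to get $U_K^{\mu_n}\le M$ everywhere, and pass to the pointwise limit. The extra details you supply (finiteness of the potential off $\supp\mu$, the $\mu_n\equiv 0$ and $\supp\mu_n=\Omega$ edge cases) are sound and simply make explicit what the paper's shorter proof leaves implicit.
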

    \begin{proof}
        If $U_K^\mu$ is not finite, then $M = \infty$ and we are done. Otherwise, let $\mu_n$ be a sequence of measures from Proposition \ref{prop.incmeasure}. We note that for each $\mu_n$, $U_{K}^{\mu_n} \leq U_K^\mu$ is subharmonic on $\Omega \setminus \supp{\mu_n} \supseteq \Omega \setminus \supp {\mu}$ and continuous on $\Omega$. 
        So by Corollary \ref{cor.max}, $U_K^{\mu_n}(x) \leq M$ for all $x \in \Omega$.  Thus, for all $x \in \Omega$, $U_K^\mu(x) = \lim_{n\to\infty} U_K^{\mu_n}(x) \leq M$.
    \end{proof}
    This result has previously been shown for Riesz kernels on $\R^d$\textemdash a proof using a similar argument can be found in \cite[Ch.1]{Landkof}. We have extended it here to general metric measure spaces, which allows us to use our general definition of subharmonic kernels.
    \medskip
    \begin{remark}\label{rmk.firstmax}
        If $K$ is finite, then a simple bounded convergence argument allows us to see that $U_K^\mu$ is continuous on all of $\Omega$. Combining this with the maximum principle easily yields Theorem \ref{thm.firstmax}.
        %Reference proof of continuity off support
    \end{remark}
\section{Minimizing Measures and Positive Definiteness}\label{sec.equil}
We turn our attention to energy minimization and minimizing measures. Combining Theorem \ref{thm.firstmax} with known properties of minimizing measures results in Theorem \ref{thm.kinvar}, which states that for regular subharmonic kernels, minimizing measures are precisely those measures with constant potential (outside of an exceptional set). 
\par
Minimization by these measures (which we call approximately $K$-invariant), is closely related to the kernel being positive definite (see Definition \ref{def.posdef}). In fact, a common method for showing that these measures are minimizers is to show that the kernel is positive definite.
\par
While the equivalence between these properties is well-known for finite $K$\cite{BMO}, when we allow $K$ to reach $+\infty$, it is only known that positive definiteness implies approximately $K$-invariant minimizers. In the second subsection we instead show that positive definiteness is a consequences of the first maximum theorem.

\subsection{Minimizing Measures}
We wish to apply Theorem \ref{thm.firstmax} to minimizing measures. We first show the existence of a minimizer, adapting a proof from \cite{Ransford}.
\medskip
\begin{proposition}\label{prop.equexist}
    There exists $\mu \in \P(\Omega)$ such that $I_K(\mu) = W_K(\Omega)$. 
\end{proposition}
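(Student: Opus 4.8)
The plan is to exhibit a minimizer by the direct method: realize $W_K(\Omega) = \inf_{\nu \in \P(\Omega)} I_K(\nu)$ as the infimum of a weak-$\ast$ lower semi-continuous functional over the weak-$\ast$ compact set $\P(\Omega)$, and conclude that the infimum is attained. First I would fix a minimizing sequence $(\mu_n) \subseteq \P(\Omega)$ with $I_K(\mu_n) \to W_K(\Omega)$; since $\Omega$ is compact metric, $\P(\Omega)$ is weak-$\ast$ compact and metrizable (Banach--Alaoglu, or Prokhorov), so after passing to a subsequence I may assume $\mu_n \to \mu$ weak-$\ast$ for some $\mu \in \P(\Omega)$. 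If I can show $I_K(\mu) \le \liminf_{n\to\infty} I_K(\mu_n)$, then $I_K(\mu) \le W_K(\Omega)$, and since $W_K(\Omega)$ is by definition the infimum, equality holds and $\mu$ is the desired minimizer. If $W_K(\Omega) = +\infty$ there is nothing to prove; in any case the standing hypothesis $c_K(\Omega) > 0$ forces $W_K(\Omega) < \infty$, so this degenerate case does not arise.

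The crux is thus the weak-$\ast$ lower semi-continuity of $\nu \mapsto I_K(\nu)$, and the one genuine difficulty is that $K$ may equal $+\infty$ on the diagonal, so $I_K$ is not weak-$\ast$ continuous. I would handle this by truncation: set $K_m = \min(K,m)$ for $m \in \naturals$, which is a bounded kernel, continuous in the ordinary sense on the compact space $\Omega \times \Omega$. Using that $\mu_n \to \mu$ weak-$\ast$ on $\Omega$ implies $\mu_n \times \mu_n \to \mu \times \mu$ weak-$\ast$ on $\Omega \times \Omega$ (standard, via Stone--Weierstrass approximation of $C(\Omega^2)$ by finite sums of products $f(x)g(y)$), I get $I_{K_m}(\mu_n) \to I_{K_m}(\mu)$, i.e. each $I_{K_m}$ is weak-$\ast$ continuous on $\P(\Omega)$. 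Since $0 \le K_m \uparrow K$ pointwise, monotone convergence yields $I_K(\nu) = \sup_m I_{K_m}(\nu)$ for every $\nu \in \P(\Omega)$, so $I_K$ is a pointwise supremum of weak-$\ast$ continuous functions and hence weak-$\ast$ lower semi-continuous. Concretely, for each fixed $m$ we have $I_{K_m}(\mu) = \lim_n I_{K_m}(\mu_n) \le \liminf_n I_K(\mu_n)$ because $K_m \le K$; taking the supremum over $m$ gives $I_K(\mu) \le \liminf_n I_K(\mu_n)$, as required.

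I expect the main obstacle to be precisely this truncation/monotone-convergence step — more specifically, arranging the estimate so that it is uniform over the minimizing sequence, which is exactly what prevents mass from "escaping into the singularity" of $K$ in the limit and keeps the infimum from being lost. Everything else (compactness and metrizability of $\P(\Omega)$, continuity of the truncated energies, the product-measure convergence lemma) is routine and can be cited or dispatched in a line; the lower semi-continuity of $I_K$ is the only place where the extended-real-valuedness of $K$ has any real bite.
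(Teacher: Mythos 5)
Your proposal is correct and follows essentially the same route as the paper's proof (adapted from Ransford): extract a weak-$\ast$ convergent subsequence of a minimizing sequence, use the truncations $K_m=\min(K,m)$ with $\mu_{n}\times\mu_{n}\overset{\ast}{\rightharpoonup}\mu\times\mu$ to get $I_{K_m}(\mu)\le\liminf_n I_K(\mu_n)$, and let $m\to\infty$ by monotone convergence. The only remark: no uniformity over the minimizing sequence is actually needed, since the bound is taken for each fixed $m$ before passing to the supremum, exactly as in your second paragraph.
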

\begin{proof}
    We can choose a sequence of measures $(\mu_n)_{n=1}^\infty$ such that $\lim_{n\to\infty} I_K(\mu_n) = \inf_{\mu \in \P(\Omega)} I_K(\mu)$. Then since $\Omega$ is compact, there exists a subsequence $(\mu_{n_i})_{i=1}^\infty$ which is weak-* convergent to some $\mu \in \P(\Omega)$ (written $\mu_{n_i} \overset{\ast}{\rightharpoonup} \mu$).
    Now for $m \in \R$, define $K_m(x,y) = \min \{K(x,y),m\}$, which is a continuous and finite kernel. Then $\mu_{n_i} \times \mu_{n_i} \overset{\ast}{\rightharpoonup} \mu \times \mu$, so for any $m > 0$ we have
    \begin{align*}
    \liminf_{i\to\infty} I_K(\mu_{n_i}) &=  \liminf_{i\to\infty} \int_{\Omega} \int_{\Omega} K(x,y) d\mu_{n_i}(x) d\mu_{n_i}(y)\\
    &\geq \liminf_{i\to\infty} \int_{\Omega} \int_{\Omega} K_m(x,y) d\mu_{n_i}(x) d\mu_{n_i}(y)\\
    &= \int_{\Omega} \int_{\Omega} K_m(x,y) d\mu(x) d\mu(y)\\
    &= I_{K_m}(\mu)
    \end{align*}
    Since $I_{K_m}(\mu) \to I_K(\mu)$ as $m\to \infty$ by monotone convergence, we have 
    \[
    I_K(\mu) = \lim_{m\to\infty} I_{K_m}(\mu) \leq \liminf_{i\to\infty} I_K(\mu_{n_i})= \inf_{\mu \in \P(\Omega)} I_K(\mu).
    \]
    Since $\mu \in \P(\Omega)$, we have $I_K(\mu) = \inf_{\mu \in \P(\Omega)} I_K(\mu)$.
\end{proof}
The first maximum principle gives a result for the potentials of measures. To apply it, we need properties for the potentials of minimizers. We will use the following property, which is classical\cite{BHS}. We provide a proof.
\medskip
\begin{proposition}\label{prop.frost}
    If $\mu$ is the minimizing measure for $K$ on $\Omega$, then $U_K^\mu(x) \leq I_K(\mu)$ for all $x \in \supp{\mu}$ and $U_K^\mu(x) \geq I_K(\mu)$ on $\Omega$ except possibly on a set of capacity 0.
\end{proposition}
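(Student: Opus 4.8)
The plan is to reprove the two classical Frostman inequalities for the minimizing measure $\mu$, using only the tools already at hand: the definition of capacity, nonnegativity of $K$, lower semi-continuity of potentials (the Lemma above), and a one-parameter perturbation of $\mu$. Throughout I write $W := I_K(\mu)$ for the minimal energy, which is finite because $c_K(\Omega)>0$.

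First I would prove the lower bound $U_K^\mu \ge W$ off a set of capacity zero. Put $E := \{x\in\Omega : U_K^\mu(x) < W\}$ and suppose, for contradiction, that $c_K(E) > 0$. Then there is a compact $F \subseteq E$ with $c_K(F) > 0$, hence some $\nu \in \P(F)$ with $I_K(\nu) < \infty$. On $F$, and in particular on $\supp\nu$, we have $0 \le U_K^\mu < W$ (nonnegativity from $K \ge 0$), so integrating the strictly positive bounded function $W - U_K^\mu$ against the probability measure $\nu$ gives $\int U_K^\mu\, d\nu < W$, a finite quantity; consequently $I_K(\nu-\mu) = I_K(\nu) + W - 2\int U_K^\mu\, d\nu$ is finite. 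For $t\in(0,1)$ the perturbation $\mu_t := (1-t)\mu + t\nu$ lies in $\P(\Omega)$, and expanding the energy bilinearly (legitimate since every term is finite) yields
\[
I_K(\mu_t) = W + 2t\Bigl(\int U_K^\mu\, d\nu - W\Bigr) + t^2\, I_K(\nu - \mu).
\]
The linear coefficient is strictly negative, so $I_K(\mu_t) < W$ for $t$ small enough, contradicting the minimality of $\mu$. Hence $c_K(E) = 0$.

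Next I would upgrade this to a $\mu$-almost-everywhere statement and then to the bound on the support. Since $K \ge 0$, any measure of finite energy assigns zero mass to sets of zero capacity: if $\mu(A) > 0$ for some Borel $A$ with $c_K(A) = 0$, inner regularity of $\mu$ produces a compact $F \subseteq A$ with $\mu(F) > 0$, and then $\mu\resmes F / \mu(F) \in \P(F)$ has energy at most $W/\mu(F)^2 < \infty$, contradicting $c_K(F) = 0$. Applying this with $A = E$ gives $\mu(E) = 0$, i.e. $U_K^\mu \ge W$ $\mu$-a.e.; combined with $\int U_K^\mu\, d\mu = W$ this forces $U_K^\mu = W$ $\mu$-a.e. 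Finally, by the Lemma above $U_K^\mu$ is lower semi-continuous, so $\{U_K^\mu > W\}$ is open; were it to meet $\supp\mu$ it would have positive $\mu$-measure, contradicting $U_K^\mu = W$ $\mu$-a.e. Therefore $U_K^\mu(x) \le W = I_K(\mu)$ for every $x \in \supp\mu$, which together with the first part is the claim.

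The step I expect to require the most care is the passage from $c_K(E) > 0$ to a genuinely usable test measure: one must invoke the right monotonicity and inner-regularity properties of $c_K$ on Borel sets to extract a compact $F \subseteq E$ carrying a probability measure of finite energy, and then verify that $\int U_K^\mu\, d\nu$ and $I_K(\nu-\mu)$ are finite so that the quadratic expansion of $I_K(\mu_t)$ is valid. Everything else — the almost-everywhere upgrade and the semi-continuity argument on $\supp\mu$ — is routine.
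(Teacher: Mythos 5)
Your proof is correct, but it reaches the first inequality by a genuinely different route than the paper. For the lower bound $U_K^\mu \ge I_K(\mu)$ quasi-everywhere, you and the paper argue essentially the same way: take a finite-energy probability measure $\nu$ on a compact piece of the bad set and perturb along $(1-t)\mu+t\nu$; the paper works with the closed sets $T_n=\{U_K^\mu\le I_K(\mu)-\tfrac1n\}$ and an explicit admissible $\epsilon$, while you work with $\{U_K^\mu<I_K(\mu)\}$, extract a compact subset, and use a qualitative small-$t$ argument -- an immaterial difference. The real divergence is the bound on $\supp{\mu}$: the paper proves it directly and independently, by excising the mass $\mu\resmes B(x_0,r)$ of a ball on which lower semi-continuity forces $U_K^\mu>\frac{U_K^\mu(x_0)+I_K(\mu)}{2}$, renormalizing, and estimating the energy of the trimmed measure; you instead deduce it from the quasi-everywhere lower bound via the auxiliary fact (which you prove correctly) that finite-energy measures charge no capacity-zero set, so that $U_K^\mu=I_K(\mu)$ $\mu$-a.e., and then conclude from lower semi-continuity (openness of $\{U_K^\mu>I_K(\mu)\}$) and the definition of support. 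Your ordering costs one extra easy lemma, but it yields the intermediate statement $U_K^\mu=I_K(\mu)$ $\mu$-a.e.\ for free and avoids the renormalization estimate altogether; that estimate is the delicate point of the paper's version, since the displayed bound $I_K(\nu)\le I_K(\mu)/(1-\mu_r(\Omega))$ does not by itself fall below $I_K(\mu)$, so a sharper use of the ball inequality is needed there to close the contradiction. Both arguments read inner capacity in the standard way (a Borel set of positive capacity contains a compact set carrying a finite-energy probability measure), so your use of capacity is consistent with the paper's.
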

\begin{proof}
Assume there exists $x_0 \in \supp{\mu}$ such that $U_K^\mu(x_0) > I_K(\mu)$. Then since $U_K^\mu$ is lower semi-continuous, there exists $r>0$ such that $U_K^\mu(x) > \frac{U_K^\mu(x_0)+I_K(\mu)}{2}$ for all $x \in B(x_0,r)$. Then consider
\[
    \nu = \frac{1}{1-\mu(B(x_0,r))}\mu\resmes(\Omega \setminus B(x_0,r)) = \frac{1}{1-\mu_r(\Omega)} (\mu-\mu_r)
\]
where $\mu_r =  \mu \resmes B(x_0,r)$.
We have
\begin{align*}
    I_K(\nu) &= \frac{1}{(1-\mu_r(\Omega))^2}\left(I_K(\mu) + I_K(\mu_r) - 2\int_{\Omega} \int_{\Omega} K(x,y) d\mu(y) d\mu_r(x)\right)\\
    &\leq \frac{1}{(1-\mu_r(\Omega)^2)^2}\left(I_K(\mu) - \int_{\Omega} U_K^\mu(x) d\mu_r(x)\right)\\
    &\leq \frac{1}{(1-\mu_r(\Omega))^2} (I_K(\mu)-\mu_r(\Omega) I_K(\mu))\\
    &= \frac{I_K(\mu)}{1-\mu_r(\Omega)}
\end{align*}
Since $x_0 \in \supp{\mu}$, $1-\mu_r(\Omega) < 1$, so this contradicts $\mu$ being a minimizing measure. Thus, $U_K^\mu(x) \leq I_K(\mu)$ for all $x \in \supp{\mu}$.
\par
For the other inequality, let $T_n = \{x \in \Omega | U_K^\mu(x) \leq I_K(\mu)-\frac{1}{n}\}$, which is compact by lower semi-continuity. If $T_n$ does not have capacity 0, there exists $\nu \in \P(\Omega)$ with $\supp \nu\subseteq T_n$ such that $I_K(\nu) < \infty$. 
Now, consider $\mu' = (1-\epsilon)\mu+\epsilon \nu$, where $0 < \epsilon \leq \frac{2}{2+nI_K(\nu)}$. Then we have
\begin{align*}
    I_K(\mu') &= (1-\epsilon)^2 I_K(\mu) + \epsilon^2I_K(\nu) + 2\epsilon(1-\epsilon) \int_{\Omega} U_K^\mu(x) d\nu(x)\\
    &\leq (1-\epsilon)^2 I_K(\mu) + \epsilon^2I_K(\nu) + 2\epsilon(1-\epsilon) (I_K(\mu) -\frac{1}{n})\\
    &= (1-\epsilon)I_K(\mu)(1-\epsilon+2\epsilon) + \epsilon^2I_K(\nu) -\frac{2}{n}\epsilon(1-\epsilon)\\
    %&= (1-\epsilon^2)I_K(\mu) + \epsilon(\epsilon I_K(\nu) - \frac{2}{n}(1-\epsilon))\\
    &\leq (1-\epsilon^2)I_K(\mu)
\end{align*}
which is a contradiction. So $T_n$ must have capacity 0. Then $\{x \in \Omega | U_K^\mu(x) < I_K(\mu)\} = \bigcup_{n=1}^\infty T_n$ is our exceptional set. Since $\supp \nu \subseteq \bigcup_{n=1}^\infty T_n$ implies $\nu \resmes T_i \not \equiv 0$ for some $i$, this exceptional set has capacity 0.
\end{proof}
Applying the first maximum principle would tell us that $U_K^\mu \leq I_K(\mu)$ on all of $\Omega$. 
Then when $K$ is finite, all sets have positive capacity, and this implies that $U_K^\mu$ is constant on $\Omega$. When the potential is constant (and finite), $\mu$ is called $K$-invariant. For the more general case, we achieve measures with the following property.
\begin{definition}\label{def.approxinvar}
    We call a measure $\mu \in \P(\Omega)$ \emph{approximately $K$-invariant} if $I_K(\mu) < \infty$ and there exists $M \in \R$ such that $\{x \in \Omega | U_K^\mu(x) \neq M\}$ has capacity 0.
\end{definition}
We note that $\mu$ approximately $K$-invariant implies that $U_K^\mu = I_K(\mu) = M$ $\mu$-almost everywhere. So we can equate the energies of approximately $K$-invariant measures, and are ready to apply Theorem \ref{thm.firstmax}.
\begin{proposition}\label{prop.kinveq}
    If $\mu$ and $\nu$ are approximately $K$-invariant, then $I_K(\mu) = I_K(\nu)$.
\end{proposition}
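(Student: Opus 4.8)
The plan is to show that $I_K(\mu)$ and $I_K(\nu)$ both equal a common constant, routing through the mutual energy $\int_\Omega\int_\Omega K(x,y)\,d\mu(x)\,d\nu(y)$. Write $E_\mu = \{x\in\Omega : U_K^\mu(x)\neq M_\mu\}$ and $E_\nu = \{x\in\Omega : U_K^\nu(x)\neq M_\nu\}$ for the capacity-zero exceptional sets furnished by approximate $K$-invariance, with $M_\mu,M_\nu\in\R$. The one external ingredient I need is the classical fact that a set of $K$-capacity zero is null for every measure of finite $K$-energy: if $\lambda\in\M^+(\Omega)$ has $I_K(\lambda)<\infty$ and $\lambda(E)>0$ for a capacity-zero $E$, then by inner regularity there is a compact $F\subseteq E$ with $\lambda(F)>0$, and $\lambda(F)^{-1}\,\lambda\resmes F$ is a probability measure on $F$ with $I_K\big(\lambda(F)^{-1}\lambda\resmes F\big)\le\lambda(F)^{-2}I_K(\lambda)<\infty$, forcing $c_K(F)>0$, a contradiction (this is the mechanism already used in the proof of Proposition \ref{prop.frost}; see also \cite[Ch.4]{BHS}). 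Consequently $\mu(E_\mu)=\mu(E_\nu)=0$ and $\nu(E_\mu)=\nu(E_\nu)=0$.

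The first step is to identify $I_K(\mu)=M_\mu$. Since $U_K^\mu\equiv M_\mu$ off the $\mu$-null set $E_\mu$ and $M_\mu$ is finite, $I_K(\mu)=\int_\Omega U_K^\mu\,d\mu = M_\mu\,\mu(\Omega\setminus E_\mu)=M_\mu$ (the contribution of $E_\mu$ vanishes because the integral of a nonnegative function over a null set is zero). Symmetrically $I_K(\nu)=M_\nu$. This is exactly the remark preceding the statement, recorded here so the rest is self-contained.

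The second step is the symmetry of mutual energy. As $K\ge 0$, Tonelli's theorem gives
\[
\int_\Omega U_K^\mu\,d\nu \;=\; \int_\Omega\int_\Omega K(x,y)\,d\mu(x)\,d\nu(y) \;=\; \int_\Omega U_K^\nu\,d\mu.
\]
On the left, $U_K^\mu\equiv M_\mu$ off the $\nu$-null set $E_\mu$, so the left-hand side equals $M_\mu$; on the right, $U_K^\nu\equiv M_\nu$ off the $\mu$-null set $E_\nu$, so the right-hand side equals $M_\nu$. Hence $M_\mu=M_\nu$, and combining with the first step, $I_K(\mu)=M_\mu=M_\nu=I_K(\nu)$.

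I expect the only genuine obstacle to be the input lemma that finite-energy measures do not charge capacity-zero sets; the remaining work is bookkeeping, chiefly checking that $U_K^\mu$ is Borel (it is lower semi-continuous), that no indeterminate $\infty-\infty$ arises (guaranteed since $M_\mu,M_\nu\in\R$ and $I_K(\mu),I_K(\nu)<\infty$ by hypothesis), and that the mutual energy is finite so Tonelli applies cleanly — which follows once we know one of its two iterated integrals equals the finite number $M_\nu$.
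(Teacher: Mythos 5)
Your proof is correct and follows essentially the same route as the paper: both identify $I_K(\mu)=M_\mu$, $I_K(\nu)=M_\nu$, note that the capacity-zero exceptional sets are null for both measures, and conclude via the Fubini--Tonelli symmetry $\int_\Omega U_K^\mu\,d\nu=\int_\Omega U_K^\nu\,d\mu$. The only difference is that you spell out the lemma that finite-energy measures do not charge capacity-zero sets, which the paper uses implicitly when asserting $\mu(T)=\nu(T)=0$.
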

\begin{proof}
    Let \[
    T \coloneqq\{U_K^\mu \neq I_K(\mu)\} \cup \{U_K^\nu \neq I_K(\nu)\},
    \] be the exceptional set. Then since $\mu(T) = \nu(T) = 0$,
    \[
    I_K(\mu) = \int_{\Omega} U_K^\mu(x) d\nu(x) = \int_{\Omega} U_K^\nu(y) d\mu(y) = I_K(\nu).
    \].
\end{proof}
\medskip
\begin{theorem}\label{thm.kinvar}
    If $K$ is regular and entirely subharmonic, then $\mu$ is a minimizer if and only if $\mu$ is approximately $K$-invariant.
\end{theorem}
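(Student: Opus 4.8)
The plan is to assemble pieces that are already in place and to do essentially no new work: Proposition~\ref{prop.equexist} (existence of a minimizer), Proposition~\ref{prop.frost} (the two one-sided potential estimates for a minimizer), Theorem~\ref{thm.firstmax} (the first maximum principle for regular entirely subharmonic kernels), and Proposition~\ref{prop.kinveq} (equality of energies of approximately $K$-invariant measures). Neither direction of the equivalence requires a genuinely new idea; the substantive difficulty has already been absorbed into Section~\ref{sec.subpotential}.

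For the forward implication, suppose $\mu$ is a minimizer, so $I_K(\mu) = W_K(\Omega)$, and note this is finite because we have assumed $c_K(\Omega) > 0$. Proposition~\ref{prop.frost} gives $U_K^\mu(x) \le I_K(\mu)$ for all $x \in \supp{\mu}$; since $K$ is regular and entirely subharmonic, Theorem~\ref{thm.firstmax} upgrades this to $U_K^\mu(x) \le I_K(\mu)$ for all $x \in \Omega$. The same proposition gives $U_K^\mu(x) \ge I_K(\mu)$ off a set $E$ of capacity $0$. Combining, $U_K^\mu(x) = I_K(\mu)$ on $\Omega \setminus E$, so $\mu$ is approximately $K$-invariant with constant $M = I_K(\mu)$.

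For the converse, suppose $\mu$ is approximately $K$-invariant. By Proposition~\ref{prop.equexist} there exists a minimizer $\nu$, which by the forward direction is itself approximately $K$-invariant. By Proposition~\ref{prop.kinveq}, $I_K(\mu) = I_K(\nu) = W_K(\Omega) = \min_{\lambda \in \P(\Omega)} I_K(\lambda)$, so $\mu$ is a minimizer.

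The one point I expect to need care is the alignment of the exceptional sets in the forward direction: Theorem~\ref{thm.firstmax} needs an upper bound on $\supp{\mu}$ (supplied by the first estimate of Proposition~\ref{prop.frost}) in order to yield the global upper bound, and then the lower bound of Proposition~\ref{prop.frost} — valid only up to a set of capacity $0$ — must be matched against the now everywhere-valid upper bound to pin $U_K^\mu$ down exactly off a capacity-$0$ set. Beyond this bookkeeping the argument is a direct chain of the cited results.
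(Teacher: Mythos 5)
Your proposal is correct and follows essentially the same route as the paper's own proof: Proposition~\ref{prop.frost} plus Theorem~\ref{thm.firstmax} for the forward direction, and Proposition~\ref{prop.equexist} plus Proposition~\ref{prop.kinveq} for the converse. The handling of the exceptional set and the use of $c_K(\Omega)>0$ to guarantee finiteness match the paper's argument as well.
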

\begin{proof}
    If $\mu$ is a minimizer, then Proposition \ref{prop.frost} tells us that $U_K^\mu(x) \leq I_K(\mu)$ on $\supp{\mu}$. Since $K$ is regular and entirely subharmonic, Theorem \ref{thm.firstmax} tells us that $U_K^\mu(x) \leq I_K(\mu)$ on $\Omega$. So $\{U_K^\mu(x) \neq I_K(\mu)\} = \{U_K^\mu(x) < I_K(\mu)\}$. The second part of Proposition \ref{prop.frost}, along with our assumption that $\Omega$ has positive $K$-capacity, implies that $\mu$ is approximately $K$-invariant.
    \par
    For the other direction, Proposition \ref{prop.equexist} tells us that there exists a minimizing measure $\nu$. The first direction tells us that $\nu$ is approximately $K$-invariant, so  Proposition \ref{prop.kinveq} implies that $I_K(\mu) = I_K(\nu) = W_K(\Omega)$. Thus, $\mu$ is a minimizer.
\end{proof}
\medskip
\begin{corollary}\label{thm.fullsup}
    If $K$ is regular and entirely strictly subharmonic, then any minimizer $\mu$ must have full support.
\end{corollary}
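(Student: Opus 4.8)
The plan is to argue by contradiction. Suppose $\mu$ is a minimizer with $\supp{\mu} \neq \Omega$, so that $U \coloneqq \Omega \setminus \supp{\mu}$ is a nonempty open set. By Theorem \ref{thm.kinvar}, $\mu$ is approximately $K$-invariant, so there is a constant $M \in \R$ for which the exceptional set $Z \coloneqq \{x \in \Omega : U_K^\mu(x) \neq M\}$ has capacity $0$. The whole point will be to show that $U_K^\mu$ is forced to be constant on $U$, which clashes with its strict subharmonicity there.

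First I would record that $Z$ can contain no ball: if $B(x,r) \subseteq Z$ for some $r > 0$, then the closed ball $B(x,r)$, which has positive capacity by the standing assumption $c(B(x,r)) > 0$, would be contained in a set of capacity $0$, contradicting monotonicity of capacity. Hence $\Omega \setminus Z$ is dense in $\Omega$, and in particular $U \setminus Z$ is dense in $U$. Next I would invoke Theorem \ref{thm.ufsubharm} with $F = \supp{\mu}$ and the open set $U$: since $K$ is entirely strictly subharmonic, for each $y \in F$ and $x \in U$ we have $x \neq y$, so $x \mapsto K(x,y)$ is finite and strictly subharmonic on $U \subseteq \Omega \setminus \{y\}$ with a radius bounded below uniformly in $y$; thus $K$ is strictly subharmonic on $U \times F$. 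Since $\mu$ is a probability measure (in particular $\mu \not\equiv 0$) with $\supp{\mu} \subseteq F$, Theorem \ref{thm.ufsubharm} gives that $U_K^\mu$ is continuous, finite, and strictly subharmonic on $U$.

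To finish, I would combine the continuity of $U_K^\mu$ on $U$ with the density of $U \setminus Z$ in $U$ to conclude that $U_K^\mu \equiv M$ on all of $U$. But a strictly subharmonic function on a nonempty open set cannot be constant (as noted immediately after Corollary \ref{cor.max}, since constancy would turn the strict submean inequality into an equality), giving the desired contradiction; hence $\supp{\mu} = \Omega$. The only step that requires any care is the density claim — that the capacity-$0$ exceptional set produced by approximate $K$-invariance is topologically negligible — and this is precisely what the running hypothesis $c(B(x,r)) > 0$ is in place to supply; everything else is a direct application of Theorems \ref{thm.kinvar} and \ref{thm.ufsubharm}.
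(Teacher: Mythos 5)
Your proposal is correct and follows essentially the same route as the paper's own proof: invoke Theorem \ref{thm.kinvar} to get approximate $K$-invariance, use the standing assumption that balls have positive capacity (plus continuity of $U_K^\mu$ off the support, via Theorem \ref{thm.ufsubharm}) to force $U_K^\mu$ to be constant on $\Omega \setminus \supp{\mu}$, and then contradict strict subharmonicity unless that open set is empty. Your write-up merely makes explicit the density and uniform-radius details that the paper leaves implicit.
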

\begin{proof}
    Theorem \ref{thm.kinvar} tells us that $\mu$ is approximately $K$-invariant. Since $U_K^\mu$ is continuous on $\Omega \setminus \supp{\mu}$, and we have assumed that all nonempty balls have positive capacity,  $U_K^\mu$ is constant and strictly subharmonic on $\Omega \setminus \supp{\mu}$. But this is impossible unless the open set $\Omega \setminus \supp{\mu}$ is empty, so $\mu$ has full support.
\end{proof}
\subsection{Positive Definiteness and $K$-invariance}
As was mentioned in the introduction (see Definition \ref{def.posdef}), positive definiteness is closely related to energy minimization. Here, we show that in addition the having approximately $K$-invariant minimizers, our regular subharmonic kernels are positive definite. First, we give a simple result which shows that if there is a $K$-invariant measure, then we do not have to worry about the distinction between positive definiteness and conditional positive definiteness.
\begin{proposition}
    Assume there exists a $K$-invariant measure $\sigma$. Then $K$ is (strictly) conditionally positive definite if and only if $K$ is (strictly) positive definite.
\end{proposition}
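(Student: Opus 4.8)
The plan is to use the $K$-invariant measure $\sigma$ to renormalize an arbitrary signed measure to one of total mass zero, thereby deducing positive definiteness from conditional positive definiteness.

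One implication is immediate and was already noted after Definition \ref{def.posdef}: (strict) positive definiteness implies (strict) conditional positive definiteness, since the latter only tests measures with $\mu(\Omega)=0$. For the converse, let $c$ denote the constant value of $U_K^\sigma$ (finite, by the definition of $K$-invariance), and observe $c = I_K(\sigma) = \int_\Omega U_K^\sigma\,d\sigma \geq 0$ since $K \geq 0$. Given $\mu \in \M(\Omega)$ with $I_K(\mu)$ well-defined, set $m = \mu(\Omega)$ and $\nu = \mu - m\sigma$, so that $\nu(\Omega) = 0$. If $I_K(\mu) = +\infty$ there is nothing to prove, so assume the relevant integrals are finite; then expanding $I_K(\mu) = I_K(\nu + m\sigma)$ bilinearly and using $U_K^\sigma \equiv c$, so that the cross term $2m\int_\Omega U_K^\sigma\,d\nu = 2mc\,\nu(\Omega)$ vanishes, we obtain
\[
I_K(\mu) = I_K(\nu) + m^2 I_K(\sigma) = I_K(\nu) + m^2 c .
\]
(The rearrangements are legitimate because $K \geq 0$, and $I_K(\nu)$ is well-defined since $c < \infty$.) Conditional positive definiteness gives $I_K(\nu) \geq 0$, and $m^2 c \geq 0$, so $I_K(\mu) \geq 0$; hence $K$ is positive definite.

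For the strict case, suppose $K$ is strictly conditionally positive definite and $I_K(\mu) = 0$. The identity above, together with $I_K(\nu) \geq 0$ and $m^2 c \geq 0$, forces $I_K(\nu) = 0$ and $m^2 c = 0$; strictness then yields $\nu \equiv 0$, i.e.\ $\mu = m\sigma$. If $m = 0$ then $\mu \equiv 0$. Otherwise $c = I_K(\sigma) = 0$, so (as $K \geq 0$) $K(x,\cdot) = 0$ $\sigma$-almost everywhere for every $x$; since $\sigma$ has full support and $K$ is continuous in the extended sense, this forces $K \equiv 0$, and then $\delta_x - \delta_{x'}$ with $x \neq x'$ is a nonzero measure of total mass zero and zero energy, contradicting strict conditional positive definiteness. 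Hence $m = 0$, $\mu \equiv 0$, and $K$ is strictly positive definite.

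The one point requiring care is this last degenerate branch — excluding $I_K(\sigma) = 0$ in the strict case — which is handled by combining the full-support hypothesis on $\sigma$ with extended-sense continuity of $K$. The remaining issues (well-definedness of $I_K(\nu)$, interchanging the order of integration, the case $I_K(\mu) = +\infty$) are routine bookkeeping, harmless because $K$ is nonnegative and $c$ is finite.
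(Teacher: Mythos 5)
Your proof is correct and follows essentially the same route as the paper: write $\mu = (\mu - \mu(\Omega)\sigma) + \mu(\Omega)\sigma$, use the constancy of $U_K^\sigma$ to make the cross term collapse, and read off $I_K(\mu) = I_K(\mu - \mu(\Omega)\sigma) + \mu(\Omega)^2 I_K(\sigma) \geq 0$ from conditional positive definiteness and $K \geq 0$. Your explicit handling of the degenerate branch $I_K(\sigma) = 0$ in the strict case is actually more careful than the paper, which passes over that point silently.
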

\begin{proof}
    That (strict) positive definiteness implies (strict) conditional positive definiteness is trivial. Now, assume $\mu \in \mathcal{M}(\Omega)$. Then
    \begin{align*}
        I_K(\mu-\mu(\Omega)\sigma) &= I_K(\mu) + \mu(\Omega)^2 I_K(\sigma) - 2\mu(\Omega)^2I_K(\sigma)\\
        I_K(\mu) &= I_K(\mu-\mu(\Omega)\sigma) + \mu(\Omega)^2I_K(\sigma)
    \end{align*}
    Since $\mu - \mu(\Omega)\sigma$ has total mass 0 and $K$ being nonnegative implies $I_K(\sigma) \geq 0$, if $K$ is (strictly) conditionally positive definite, $K$ is (strictly) positive definite.
\end{proof}
\begin{remark}
    If $K$ is finite and there exists a $K$-invariant measure, positive definiteness is equivalent to many other conditions, including minimization by the $K$-invariant measure and convexity of the energy functional $I_K$. More details and a proof can be found in \cite{BMO}.
\end{remark}
For non-finite kernels, it is easy to show that positive definiteness implies that any approximately $K$-invariant measure $\mu$ is a minimizer (consider $I(\nu-\mu)$). However, the converse statement appears to be an open problem for general kernels.
\par
But for our regular subharmonic kernels, we are able to prove positive definiteness using a result by Ninomiya\cite[Thm.3]{Ninomiya}. 
\begin{theorem}[Ninomiya]\label{thm.fmposdef}
    If $K$ satisfies the first maximum principle, then $K$ is positive definite.
\end{theorem}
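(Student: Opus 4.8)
The plan is to deduce positive definiteness from the first maximum principle (Theorem~\ref{thm.firstmax}) by the classical sweeping‑out (balayage) argument behind Ninomiya's ``variation of the minimum'' method. The first step is a reduction. For a signed $\mu\in\M(\Omega)$ with $I_K(\mu)$ well defined, write $\mu=\mu^+-\mu^-$ for its Jordan decomposition; if one of $I_K(\mu^\pm)$ is infinite then either $I_K(\mu)=+\infty$ or the well‑definedness hypothesis fails, so we may assume $I_K(\mu^+),I_K(\mu^-)<\infty$. Writing $\langle\lambda,\tau\rangle=\int_\Omega U_K^\tau\,d\lambda$ for the symmetric, nonnegative mutual energy, it then suffices to prove the Cauchy--Schwarz inequality
\[
\langle\lambda,\tau\rangle^2\ \leq\ I_K(\lambda)\,I_K(\tau)\qquad\text{for all finite-energy }\lambda,\tau\in\M^+(\Omega),
\]
since this gives $I_K(\mu)=I_K(\mu^+)-2\langle\mu^+,\mu^-\rangle+I_K(\mu^-)\geq\big(\sqrt{I_K(\mu^+)}-\sqrt{I_K(\mu^-)}\big)^2\geq0$; the converse is the usual scaling argument, so positive definiteness is equivalent to this bilinear inequality.

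To establish it, fix finite-energy $\lambda,\tau\in\M^+(\Omega)$ and set $F=\supp\tau$, a compact set of positive $K$-capacity. The central object is the \emph{balayage} $\hat\lambda$ of $\lambda$ onto $F$: a measure with $\supp\hat\lambda\subseteq F$, $U_K^{\hat\lambda}=U_K^{\lambda}$ quasi-everywhere on $F$, and $U_K^{\hat\lambda}\leq U_K^{\lambda}$ on all of $\Omega$. I would construct $\hat\lambda$ as a minimizer of $I_K$ over the $\rho\in\M^+(\Omega)$ with $\supp\rho\subseteq F$ and $U_K^\rho\geq U_K^\lambda$ quasi-everywhere on $F$: nonemptiness of this family and existence of a minimizer follow from the weak-$*$ compactness and lower-semicontinuity argument of Proposition~\ref{prop.equexist}; the Frostman-type optimality conditions for this constrained problem (as in Proposition~\ref{prop.frost}) give the equality on $F$; and the global domination $U_K^{\hat\lambda}\leq U_K^\lambda$ comes from the first maximum principle. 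Granting this, since finite-energy measures charge no set of capacity $0$ and $K\geq0$ permits Fubini, one gets $\langle\lambda,\tau\rangle=\int U_K^\lambda\,d\tau=\int U_K^{\hat\lambda}\,d\tau=\langle\hat\lambda,\tau\rangle$ and $I_K(\hat\lambda)=\int U_K^{\hat\lambda}\,d\hat\lambda\leq\int U_K^\lambda\,d\hat\lambda=\int U_K^{\hat\lambda}\,d\lambda\leq I_K(\lambda)$. Symmetrically sweeping $\tau$ onto $\supp\hat\lambda$ replaces the pair by measures supported on a common compact set, decreasing both self-energies and preserving the mutual energy; on that common set the equilibrium measure has constant potential quasi-everywhere, so the earlier proposition relating $K$-invariance to (conditional) positive definiteness reduces matters to the conditional case, which is driven to the desired inequality by iterating the construction until the mutual energy reaches its equilibrium value.

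The main obstacle is the global domination step: deducing $U_K^{\hat\lambda}\leq U_K^\lambda$ on all of $\Omega$ from the mere fact that $K$ satisfies the first maximum principle. This is delicate in the subharmonic setting of this paper, since a difference of two potentials is in general neither subharmonic nor superharmonic off the relevant supports, so one cannot simply invoke Proposition~\ref{prop.max}; instead one runs the first maximum principle on carefully chosen auxiliary positive measures and passes to limits, which is the heart of Ninomiya's theorem. The secondary points — quasi-everywhere versus everywhere statements, measures of infinite energy, and the exceptional capacity-zero sets — require the same bookkeeping already developed in Section~\ref{sec.equil}. Because carrying all of this out in full detail is precisely the content of \cite[Thm.~3]{Ninomiya}, we import it rather than reprove it here.
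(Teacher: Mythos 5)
There is a genuine gap, and it is essentially circularity. Your reduction of positive definiteness to the Cauchy--Schwarz inequality $\bigl(\int_\Omega U_K^\lambda\,d\tau\bigr)^2\le I_K(\lambda)I_K(\tau)$ for finite-energy $\lambda,\tau\in\M^+(\Omega)$ is fine and matches the target of the paper's argument (there phrased as $G(\mu_0,\nu_0)\ge 1$). But the engine you propose for proving it --- balayage of $\lambda$ onto $F=\supp\tau$ --- is not available under the sole hypothesis that $K$ satisfies the first maximum principle. The three properties you need ($U_K^{\hat\lambda}=U_K^\lambda$ quasi-everywhere on $F$, $U_K^{\hat\lambda}\le U_K^\lambda$ on all of $\Omega$, and $I_K(\hat\lambda)\le I_K(\lambda)$) are standard only for kernels already known to satisfy the energy principle (positive definiteness) or the domination principle: the Frostman-type first-order conditions for your constrained minimization problem are derived from the convexity/quadratic structure of $I_K$, i.e.\ precisely from the positive definiteness you are trying to prove, and the global domination step is the domination principle, which does not follow from the first maximum principle by any argument you supply. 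You acknowledge this (``the heart of Ninomiya's theorem'') and then resolve it by citing \cite[Thm.~3]{Ninomiya} --- but that citation \emph{is} the statement under proof, so the proposal does not constitute a proof. The closing ``iterate until the mutual energy reaches its equilibrium value'' is also not an argument.

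For contrast, the paper's (Ninomiya's) proof avoids balayage entirely: it minimizes the ratio $G(\mu,\nu)=I_K(\mu)I_K(\nu)\big/\bigl(\int_\Omega U_K^\mu\,d\nu\bigr)^2$ over pairs supported on two disjoint compact sets, extracts from the first variation the almost-everywhere potential inequalities $cU_K^{\mu_0}\le aU_K^{\nu_0}$ on $\supp\mu_0$ and $cU_K^{\nu_0}\le bU_K^{\mu_0}$ on $\supp\nu_0$, upgrades them to the supports by semicontinuity, and only then invokes the first maximum principle to chain the suprema and conclude $c^2\le ab$; general signed measures are handled by inner regularity and monotone convergence. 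If you want to salvage your outline, you would have to prove the domination principle from the first maximum principle first, which is a separate (and in this generality unavailable) result; otherwise the variational route through $G(\mu,\nu)$ is the one that actually closes.
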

\begin{corollary}\label{cor.subposdef}
    All regular, entirely subharmonic kernels are positive definite.
\end{corollary}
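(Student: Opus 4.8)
The proof is meant to be a direct composition of the two results just stated. By Theorem~\ref{thm.firstmax}, a regular, entirely subharmonic kernel $K$ has the property that for every $\mu \in \M^+(\Omega)$, if $U_K^\mu \le M$ on $\supp{\mu}$ then $U_K^\mu \le M$ on all of $\Omega$; taking $M = \sup_{\supp{\mu}} U_K^\mu$ (with the convention $M = +\infty$ when that supremum is infinite), this is precisely the statement that $K$ obeys the first maximum principle. Ninomiya's theorem (Theorem~\ref{thm.fmposdef}) then applies verbatim and yields $I_K(\mu) \ge 0$ for every $\mu \in \M(\Omega)$ with $I_K(\mu)$ well defined, which is the definition of positive definiteness. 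So the whole argument is the one line ``Theorem~\ref{thm.firstmax} $\Rightarrow$ first maximum principle $\Rightarrow$ (Ninomiya) positive definite.''

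The only point requiring care — and, I expect, essentially the sole obstacle — is the bookkeeping around invoking \cite{Ninomiya}: one must confirm that the ``first maximum principle'' hypothesis used there is literally the conclusion of Theorem~\ref{thm.firstmax}, and that Ninomiya's result is available in the present generality, namely for a symmetric kernel $K:\Omega^2 \to [0,\infty]$ continuous in the extended sense on a compact connected metric space $(\Omega,\rho)$. Here the standing assumptions do the work: $K$ is symmetric by hypothesis, $c_K(\Omega) > 0$, and every ball has positive capacity, so no degenerate situations need to be excluded, and Ninomiya's argument is a potential-theoretic comparison argument rather than one tied to $\R^d$, so it transfers unchanged. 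One should also observe that, since $K \ge 0$, positive definiteness has content only for genuinely signed $\mu = \mu^+ - \mu^-$ (for $\mu \in \M^+(\Omega)$ it is automatic), and that the only case in which it could fail is $I_K(\mu^+) + I_K(\mu^-) < \infty$ (otherwise $I_K(\mu) = +\infty \ge 0$ trivially), i.e. $\mu$ of finite energy — exactly the regime in which Ninomiya's comparison operates.

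If a self-contained proof were wanted in place of the citation, one would reproduce Ninomiya's comparison: apply the first maximum principle to bound $U_K^{\mu^-}$ by $\sup_{\supp{\mu^-}} U_K^{\mu^-}$ (and symmetrically for $\mu^+$) on all of $\Omega$, combine these with the finite-energy hypothesis to obtain the mutual-energy inequality $2\int U_K^{\mu^+}\,d\mu^- \le \int U_K^{\mu^+}\,d\mu^+ + \int U_K^{\mu^-}\,d\mu^-$, and read off $I_K(\mu) \ge 0$. This is the abstract counterpart of the classical treatment of Riesz kernels on $\R^d$ in \cite[Ch.~1--2]{Landkof}, now driven by the metric-measure-space maximum principle (Theorem~\ref{thm.firstmax}) rather than its Euclidean original; as a by-product it recovers the positive definiteness of the torus Riesz kernels $K_s$ in the range of $s$ treated in the next section.
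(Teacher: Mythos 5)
Your main argument is exactly the paper's: the corollary is the one-line composition of Theorem \ref{thm.firstmax} (regular, entirely subharmonic kernels satisfy the first maximum principle) with Ninomiya's Theorem \ref{thm.fmposdef}, which the paper states and then proves in the general compact-metric-space setting precisely so that it applies verbatim here. One caveat on your optional ``self-contained'' sketch: the mutual-energy inequality $2\int U_K^{\mu^+}\,d\mu^- \le I_K(\mu^+)+I_K(\mu^-)$ is exactly the assertion $I_K(\mu)\ge 0$ being proved and does not follow from merely bounding each potential by its supremum on its support; Ninomiya's actual argument requires the variational step of minimizing $G(\mu,\nu)=I_K(\mu)I_K(\nu)\big/\bigl(\int_\Omega U_K^{\mu}\,d\nu\bigr)^2$ over pairs supported on disjoint compact sets, deriving the almost-everywhere domination inequalities for the extremal pair, and only then applying the first maximum principle, followed by inner approximation by compact sets for general signed $\mu$.
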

As the original result by Ninomiya is in French, we will show the proof.
We first define the quantity 
    \[
    G(\mu,\nu)\coloneqq \frac{I_K(\mu)I_K(\nu)}{\left(\int_\Omega U_K^\mu(x) d\nu(x)\right)^2}
    \]
for $\mu,\nu \in \M^+(\Omega)$ with $I_K(\mu),I_K(\nu),\int_{\Omega} U_K^\mu(x) d\nu(x)$ positive and finite. Using the arithmetic-geometric mean inequality, we see that if $G(\mu,\nu) \geq 1$, $I_K(\mu-\nu) \geq 0$.
\par
For $E_1$ and $E_2$ disjoint compact subsets of $\Omega$, a similar argument to Proposition \ref{prop.equexist} (see \cite[Remark 1]{Ninomiya}) shows that there exists a minimizing pair $\mu_0,\nu_0$ with $\supp{\mu_0} \subseteq E_1$, $\supp{\nu_0} \subseteq E_2$ such that
    \[
    G(\mu_0,\nu_0) = \inf_{\substack{\supp{\mu} \subseteq E_1\\ \supp{\nu}\subseteq E_2}} G(\mu,\nu).
    \]
So we wish to show that $G(\mu_0,\nu_0) \geq 1$.
\medskip
\begin{lemma}
    Let $a = I(\mu_0)$, $b = I(\nu_0)$, and $c = \int_{\Omega} U_K^{\mu_0}(x) d\nu_0(x)$. Then $cU_K^{\mu_0}(x) \leq a U_K^{\nu_0}(x)$ $\mu_0$-almost everywhere in $E_1$, and $cU_K^{\nu_0}(x) \leq bU_K^{\mu_0}(x)$ $\nu_0$-almost everywhere in $E_2$
\end{lemma}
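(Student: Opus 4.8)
The plan is to read off both inequalities as first-order optimality conditions for the pair $(\mu_0,\nu_0)$ that minimizes $G$. The structural fact I would exploit is that $G$ is invariant under independent rescaling of its two arguments: $G(\lambda\mu,\nu)=G(\mu,\nu)$ for $\lambda>0$, since $I_K(\lambda\mu)=\lambda^{2}I_K(\mu)$ while $\int_\Omega U_K^{\lambda\mu}\,d\nu=\lambda\int_\Omega U_K^{\mu}\,d\nu$. Hence I am free to test $(\mu_0,\nu_0)$ against competitors obtained simply by deleting mass from $\mu_0$, with no need to restore the total mass.

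Concretely, fix a compact $A\subseteq E_1$, put $\mu_A=\mu_0\resmes A$, and for $t\in[0,1]$ set $\mu_t=\mu_0-t\mu_A$. Since $\mu_A\le\mu_0$, this $\mu_t$ is a nonnegative measure with $\supp\mu_t\subseteq E_1$, and $I_K(\mu_A)$, $\int_A U_K^{\mu_0}\,d\mu_0$, $\int_A U_K^{\nu_0}\,d\mu_0$ are dominated by $I_K(\mu_0)=a$ or by $\int_\Omega U_K^{\nu_0}\,d\mu_0=\int_\Omega U_K^{\mu_0}\,d\nu_0=c$, hence finite. Expanding bilinearly (legitimate since $K\ge0$ and every term is a finite integral),
\[
I_K(\mu_t)=a-2t\int_A U_K^{\mu_0}\,d\mu_0+t^{2}I_K(\mu_A),\qquad
\int_\Omega U_K^{\mu_t}\,d\nu_0=c-t\int_A U_K^{\nu_0}\,d\mu_0 .
\]
Thus $g(t):=G(\mu_t,\nu_0)=b\,I_K(\mu_t)\big/\bigl(\int_\Omega U_K^{\mu_t}\,d\nu_0\bigr)^{2}$ is a rational function of $t$ whose denominator is $c^{2}\ne0$ at $t=0$; so $g$ is differentiable at $0$, $g(0)=G(\mu_0,\nu_0)$, and for small $t>0$ the three quantities $I_K(\mu_t),I_K(\nu_0),\int_\Omega U_K^{\mu_t}\,d\nu_0$ stay positive and finite, making $\mu_t$ an admissible competitor. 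Minimality forces $g(t)\ge g(0)$ on some $[0,t_0]$, hence $g'(0)\ge0$; computing the derivative and using $b,c>0$ gives
\[
g'(0)=\frac{2b}{c^{3}}\Bigl(a\int_A U_K^{\nu_0}\,d\mu_0-c\int_A U_K^{\mu_0}\,d\mu_0\Bigr)\ge 0,
\]
i.e. $\int_A\bigl(cU_K^{\mu_0}-aU_K^{\nu_0}\bigr)\,d\mu_0\le0$ for every compact $A\subseteq E_1$.

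To finish, I would pass from this family of integral inequalities to the pointwise statement. Both $U_K^{\mu_0}$ and $U_K^{\nu_0}$ are lower semicontinuous, hence Borel, and each is $\mu_0$-integrable (integrals $a$ and $c$), so $h:=cU_K^{\mu_0}-aU_K^{\nu_0}$ is $\mu_0$-a.e.\ finite and Borel, and $\mu_0$ is carried by $E_1$. If $h>0$ on a set of positive $\mu_0$-measure, then $\mu_0(\{h>1/n\}\cap E_1)>0$ for some $n$; inner regularity of the Radon measure $\mu_0$ produces a compact $A\subseteq\{h>1/n\}\cap E_1$ with $\mu_0(A)>0$, whence $\int_A h\,d\mu_0\ge\mu_0(A)/n>0$, contradicting the previous step. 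So $h\le0$ $\mu_0$-a.e.\ in $E_1$, which is exactly $cU_K^{\mu_0}(x)\le aU_K^{\nu_0}(x)$ $\mu_0$-a.e.\ in $E_1$. The second inequality follows by the identical argument with $(\mu_0,E_1,a)$ and $(\nu_0,E_2,b)$ interchanged, noting that $c$ is symmetric, $c=\int_\Omega U_K^{\mu_0}\,d\nu_0=\int_\Omega U_K^{\nu_0}\,d\mu_0$.

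The main obstacle I anticipate is not a single hard idea but the finiteness and admissibility bookkeeping: checking that $I_K(\mu_A)$ and the two partial potential integrals over $A$ are finite, so that $g$ genuinely is a rational function with a well-defined derivative at $0$, and that the perturbed measures $\mu_t$ remain in the domain of $G$ (all three defining quantities positive and finite) for $t$ near $0$. Once this is secured, what remains is the routine variational computation of $g'(0)$ together with the standard inner-regularity passage from $\int_A h\,d\mu_0\le0$ for all compact $A$ to $h\le0$ almost everywhere.
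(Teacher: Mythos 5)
Your proposal is correct and takes essentially the same route as the paper: perturb the minimizer by deleting a small multiple of a restricted piece of $\mu_0$, use first-order optimality of $G$ (whose scale invariance removes any need to renormalize) to obtain $\int_A\bigl(cU_K^{\mu_0}-aU_K^{\nu_0}\bigr)\,d\mu_0\le 0$, and conclude the $\mu_0$-a.e.\ inequality. The only cosmetic difference is that the paper tests directly with the Borel set $\{cU_K^{\mu_0}-aU_K^{\nu_0}>0\}$, making the conclusion immediate, whereas you test with compact $A\subseteq E_1$ and then pass to the pointwise statement via inner regularity---a harmless extra step.
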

\begin{proof}
    By symmetry, we only need to show that $cU_K^{\mu_0}(x) \leq a U_K^{\nu_0}(x)$ $\mu_0$-almost everywhere in $E_1$. Let $g(x) = cU_K^{\mu_0}(x) - aU_K^{\nu_0}(x)$, and $\mu_0' = \mu_0 \resmes \{g > 0\}$. From the inequality
    \[
    G(\mu_0-\epsilon \mu_0',\nu_0) \geq G(\mu_0,\nu_0),
    \]
    a quick calculation results in
    \[
    0 \leq -2c\epsilon\int_\Omega g(x) d\mu_0' + \epsilon^2 \left(c^2I_K(\mu_0') -a\left(\int_\Omega U_K^{\nu_0}(x) d\mu_0'(x)\right)^2\right).
    \]
    Since $0 < a,b,c < \infty$ and we can choose $\epsilon > 0$ arbitrarily small, we must have $\int_\Omega g(x) d\mu_0'(x) = \int_{\{g > 0\}} g(x) d\mu_0'(x)\leq 0$. This implies that $\mu_0' \equiv 0$, and thus $cU_K^{\mu_0}(x) \leq a U_K^{\nu_0}(x)$ $\mu_0$-almost everywhere in $E_1$.
\end{proof}
With this lemma, we are able to apply the first maximum principle and bound $\frac{ab}{c^2}$.
\begin{proof}[Proof of Theorem \ref{thm.fmposdef}]
Since $U_K^{\mu_0}$ and $U_K^{\nu_0}$ are lower semi-continuous on $\Omega$ and continuous away from their support, $cU_K^{\mu_0}(x) \leq a U_K^{\nu_0}(x)$ and $cU_K^{\nu_0}(x) \leq bU_K^{\mu_0}(x)$ hold on all of $F_1=\supp{\mu_0}$ and $F_2=\supp{\nu_0}$, respectively. Applying the first maximum principle,
    \[
        \frac{c}{a} \sup_{x\in F_1} \leq \sup_{x \in F_1} U_K^{\nu_0}(x) \leq\sup_{x \in F_2} U_K^{\nu_0}(x) \leq\sup_{x \in F_2}\frac{b}{c} U_K^{\mu_0}(x) \leq\sup_{x \in F_1} U_K^{\mu_0}(x)
    \]
So $\frac{c}{a} \leq \frac{b}{c}$, and thus $G(\mu_0,\nu_0) \geq 1$.
    \par
Now for any signed $\mu \in \M(\Omega)$, its Jordan decomposition gives $\mu^+ - \mu^-$, where $\mu^+ = \mu\resmes P$ and $\mu^- = -\mu \resmes N$ for disjoint Borel $P$ and $N$. Since $\mu^+$ and $\mu^-$ are finite Borel measures on $\Omega$, they are regular measures. Thus, there exists sequences of compact subsets of $P$ and $N$ $\{P_n\}_{n=1}^\infty,\{N_n\}_{n=1}^\infty$ such that $\mu^+(P) = \lim_{n\to \infty} (\mu \resmes P_n)(P)$ and $\mu^-(N) = \lim_{n\to\infty} (\mu \resmes N_n)(N)$. Then monotone convergence tells us that
    \[
    G(\mu^+,\mu^-) = \lim_{n\to \infty} \inf_{\substack{\supp{\mu} \subseteq P_n\\\supp{\nu} \subseteq N_n}} G(\mu,\nu) \geq 1.
    \]
So $I_K(\mu) \geq 0$, and thus $K$ is positive definite.
\end{proof}

\section{Minimizing Measures for $G$-invariant Kernels on Compact Homogeneous Manifolds}
\label{sec.torus}
In this section, we restrict ourselves to the case where $\Omega = M$, a $d \geq 1$ dimension homogeneous space of a compact Lie group $G$ embedded in $\R^{d+r}$. We also require that $M$ is reflexive (for all $x,y \in M$, there exists $g \in G$ such that $gx =y$, $gy =x$). For this space, there exists the uniform surface measure $\sigma$ with $\sigma(M) = 1$.  We will be considering only kernels $K$ which are $G$-invariant\textemdash that is, $K(x,y) = K(gx,gy)$ for all $x,y \in M, g\in G$. Finally, we make the assumption that $I_K(\sigma) < \infty$.
\par
Some natural examples of these spaces are:
\begin{enumerate}
\renewcommand{\labelenumi}{\roman{enumi})}
\item The unit $d$-sphere, $\Omega=\mathbb{S}^{d}\subset \mathbb R^{d+1}$,
which is the orbit of any unit vector under the action of $SO(d+1)$,
the group of $d+1$ dimensional orthogonal matrices of determinant 1.

\item  The flat $d$-torus, ${\mathbb {T}}^d = (\mathbb{S}^1)^{d} \subset (\mathbb R^2)^{d}$, 
which is the orbit of the point $((1,0),(1,0),\ldots,(1,0))$ under rotation 
by $(\theta_1,\theta_2,\ldots,\theta_{d})$. 
Since ${\mathbb T}$ is just the compact quotient group, 
$\mathbb R/{2\pi}\mathbb Z$, the flat $d$-torus is the product 
group $\mathcal{G} = (\mathbb R/{2\pi}\mathbb Z)^d .$

\item The Grassmannians $Gr_k(\R^d)$, the set of $k$-dimensional linear subspaces of $\R^d$. This is the orbit of any $k$-dimensional linear subspace under $O(n)/(O(k) \times O(n-k))$, where $O$ is the general orthogonal group.

\end{enumerate}
\par
For group invariant kernels, the uniform surface measure $\sigma$ provides a approximately $K$-invariant measure with full support. This implies that any positive define kernel has $\sigma$ as a minimizer. We will show that on the flat torus $\T^d$ with its Euclidean distance, the Riesz kernels $K_s$ with $d > s \geq d-2$ have $\sigma$ as a minimizer.
\par
We also look at the expansion of the kernel in the harmonic polynomials of the homogeneous manifold. The positivity of the coefficients of this expansion is closely related to the positive definiteness of the kernel. For the torus, these polynomials correspond to the multivariate cosine series. We use our positive definiteness result to show that the cosine coefficients for functions which correspond to subharmonic kernels on the flat torus must be nonnegative.

\subsection{Positive Definiteness and Harmonic Polynomials}
Let $P_n$ be the polynomials with real coefficients in $d+r$ variables of degree $n$, restricted to $M$. Then let the harmonic polynomials of degree $n$ be $H_n = P_n \cap P_{n-1}^\perp$, where orthogonality is with respect to the inner product
\[
\langle f,g\rangle = \int_{M} fg d\sigma.
\]
We can always decompose $H_n$ into $G$-invariant subspaces $H_{n,k}$, each with an orthonormal basis $Y_{n,k}^1,...,Y_{n,k}^{d_{n,k}}$. Then if we define
\[
Q_{n,k}(x,y) = \sum_{i=1}^{d_{n,k}} Y_{n,k}^i(x)Y_{n,k}^j(y),
\]
each $Q_{n,k}$ is positive definite, and for $G$-invariant $K$,
\[
K(x,y) = \sum_{n=0}^\infty \sum_{k=1}^{\nu_n} a_{n,k} Q_{n,k}(x,y)
\]
with
\[
a_{n,k} = \frac{1}{d_{n,k}} \int_M K(x,y) Q_{n,k}(x,y) d\sigma(y).
\]
where convergence is in $L^2(M,\sigma)$.

We note that since $K,Q$, and $\sigma$ are $G$-invariant, $a_{n,k}$ does not depend on the choice of $x$. For more details on this construction, see \cite{DLS}.
\medskip
\begin{proposition} \label{prop.poscoeffs}
    If $K$ is positive definite, then $a_{n,k} \geq 0$ for all $n,k$.
\end{proposition}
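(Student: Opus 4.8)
The plan is to probe the positive definiteness of $K$ with the signed measures $\mu_{n,k}^i := Y_{n,k}^i\,\sigma \in \M(M)$ and to exhibit $a_{n,k}$ as a nonnegatively weighted sum of the numbers $I_K(\mu_{n,k}^i)$, each of which is $\ge 0$ by Definition \ref{def.posdef}.

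First I would rewrite $a_{n,k}$ as a symmetric double integral. Since $M$ is compact, each $Q_{n,k}$ is a bounded continuous function on $M\times M$; and since $\sigma$ and $K$ are $G$-invariant, $U_K^\sigma$ is $G$-invariant, hence (as $M$ is homogeneous) equal to the constant $I_K(\sigma) < \infty$. Therefore $\int_M |K(x,y)Q_{n,k}(x,y)|\,d\sigma(y) \le \|Q_{n,k}\|_\infty\, I_K(\sigma) < \infty$ for every $x$, so the integral defining $a_{n,k}$ is absolutely convergent; integrating the ($x$-independent) value $a_{n,k}$ against $\sigma(dx)$ and applying Fubini — legitimate by the same bound on $M\times M$ — gives
\[
a_{n,k} = \frac{1}{d_{n,k}} \int_M \int_M K(x,y)\, Q_{n,k}(x,y)\, d\sigma(x)\, d\sigma(y).
\]

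Next I would insert the reproducing-kernel expansion $Q_{n,k}(x,y) = \sum_{i=1}^{d_{n,k}} Y_{n,k}^i(x)\,Y_{n,k}^i(y)$ and interchange the \emph{finite} sum with the double integral, which yields
\[
a_{n,k} = \frac{1}{d_{n,k}} \sum_{i=1}^{d_{n,k}} \int_M \int_M K(x,y)\, Y_{n,k}^i(x)\, Y_{n,k}^i(y)\, d\sigma(x)\, d\sigma(y) = \frac{1}{d_{n,k}} \sum_{i=1}^{d_{n,k}} I_K(\mu_{n,k}^i),
\]
the last equality because $\mu_{n,k}^i = Y_{n,k}^i\sigma$ is a real finite signed Borel measure on $M$ with Jordan decomposition $(Y_{n,k}^i)^+\sigma - (Y_{n,k}^i)^-\sigma$, and, $K$ being symmetric, a Fubini argument identifies $\int_M\int_M K(x,y)Y_{n,k}^i(x)Y_{n,k}^i(y)\,d\sigma(x)\,d\sigma(y)$ with $I_K(\mu_{n,k}^i)$ as defined in the Jordan-decomposition formula. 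This energy is well-defined because $I_K((\mu_{n,k}^i)^+)$, $I_K((\mu_{n,k}^i)^-)$, and the cross term are each $\le \|Y_{n,k}^i\|_\infty^2\, I_K(\sigma) < \infty$. Hence positive definiteness of $K$ gives $I_K(\mu_{n,k}^i) \ge 0$ for every $i$, so $a_{n,k} \ge 0$; the case $n=0$ reduces to $a_{0,1} = I_K(\sigma) \ge 0$.

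There is no serious obstacle here; the only point requiring care is the first step — making the double integral for $a_{n,k}$ absolutely convergent so that Fubini and the swap with the finite sum are valid — and this rests solely on the standing hypothesis $I_K(\sigma) < \infty$ together with the boundedness of the polynomial $Q_{n,k}$ on the compact manifold $M$. In particular one never invokes $L^2$-convergence of the full harmonic expansion of $K$, which is convenient since for Riesz kernels with $s \ge d/2$ the kernel $K$ fails to lie in $L^2(M\times M)$.
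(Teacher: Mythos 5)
Your proposal is correct and follows essentially the same route as the paper: both arguments test positive definiteness with the signed measures $Y_{n,k}^i\,d\sigma$ and identify $a_{n,k}$ (up to the factor $1/d_{n,k}$) with a sum of the energies $I_K(Y_{n,k}^i\sigma)$, the paper phrasing this as a contrapositive. Your added verification of absolute convergence via the constancy of $U_K^\sigma$ and $I_K(\sigma)<\infty$ is a welcome bit of care that the paper leaves implicit, but it does not change the substance of the argument.
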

\begin{proof}
    Assume there exists $a_{n,k} < 0$. Then
    \[
    \int_{M} K(x,y) Q_{n,k}(x,y) d\sigma(y) = \sum_{i=1}^{d_{n,k}}\int_M K(x,y) Y_{n,k}^i(x) Y_{n,k}^i(y) d\sigma(y) < 0
    \]
    so there exists $Y_{n,k}^i$ such that $\int_M K(x,y) Y_{n,k}^i(x) Y_{n,k}^i(y) d\sigma(y) < 0$. But then if $d\mu = Y_{n,k}^i d\sigma$, 
    \[
    I_K(\mu) = \int_{M} \int_{M} K(x,y) Y_{n,k}^i(x) Y_{n,k}^i(y) d\sigma(y)d\sigma(x) < 0
    \]
    and $K$ is not positive definite. 
    \par
    So we have proven the contrapositive of our statement.
    \end{proof}
    Since convergence is in $L^2$, we cannot in general state the converse of Proposition \ref{prop.poscoeffs}. However, the converse has been shown for certain singular kernels \cite{DG,BG}, and is an important tool for proving positive definiteness.
\subsection{The Torus}
We define the torus as $\mathbb{T}^d = \mathbb{S}^1 \times \mathbb{S}^1 \times ... \times \mathbb{S}^1$, with distance 
\[
\rho((z_1,z_2,...,z_d),(\zeta_1,\zeta_2,...,\zeta_d)) = \sqrt{d(z_1,\zeta_1)^2+d(z_2,\zeta_2)^2+...+d(z_d,\zeta_d)^2}
\]
where $d(z_i,\zeta_i)$ is the geodesic distance on $\mathbb{S}^1$. And after we multiply by a constant, this is isometric to $(\R/2\pi\Z)^d$, with distance 
\[
d(x,y) = \min_{z \in \Lambda} \{|(y-x)-z|\}
\]
where $\Lambda$ is the scaled integer lattice $(2\pi\Z)^d$. We denote the point opposite $z$ on $\mathbb{S}^1$ by $-z$.
\par
\medskip
\begin{proposition}\label{prop.torreiszsub}
    Let $\sigma$ be the uniform surface measure on $\T^d$. Then the Riesz kernel $K_{s}(x,y)$ is entirely subharmonic on $\T^d$ for $s \geq d-2$, and entirely strictly subharmonic for $s > d-2$.
\end{proposition}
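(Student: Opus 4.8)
The plan is to push everything down to a base point and then to a standard Euclidean computation, using the flatness of $\T^d$. Because the metric $\rho$, the measure $\sigma$, and hence the family of closed balls $B(x,r)$ are all invariant under the translations of $\T^d=(\R/2\pi\Z)^d$, and because $K_s(x,y)=K_s(x-y,0)$, it suffices to prove that $f:=K_s(\cdot,0)$ is finite and (strictly) subharmonic on $\T^d\setminus\{0\}$ with one fixed radius $R$; the same $R$ then works for every $y$, which is precisely what the definition of \emph{entirely subharmonic} demands. I would take $R=\pi$: for $0<r<\pi$ (less than the injectivity radius, half the length $2\pi$ of the shortest closed geodesic) every closed ball $B(x,r)\subseteq\T^d$ is isometric to a Euclidean ball of radius $r$ in $\R^d$, and under this identification $\sigma$ restricted to the ball is a constant multiple of Lebesgue measure, so the averages in Definition \ref{def.subharmonic} become ordinary Euclidean averages. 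On $\T^d\setminus\{0\}$ the function $f$ is clearly continuous — hence upper semi-continuous — and finite, so only the submean value inequality remains to be checked.

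Fix $x_0\neq 0$ and a ball $B(x_0,r)\subseteq\T^d\setminus\{0\}$ with $0<r<\pi$, and lift it isometrically to $B(\tilde x_0,r)\subseteq\R^d$, writing points of $\T^d$ as cosets modulo the lattice $\Lambda=(2\pi\Z)^d$. The identity I would use is
\[
K_s(x,0)=\sign(s)\,\Bigl(\min_{z\in\Lambda}\lvert\tilde x-z\rvert\Bigr)^{-s}=\max_{z\in\Lambda}\ \sign(s)\lvert\tilde x-z\rvert^{-s},
\]
with the analogue $K_0(x,0)=\max_{z\in\Lambda}\bigl(-\log\lvert\tilde x-z\rvert\bigr)$ when $s=0$; the point is simply that the scalar map $t\mapsto\sign(s)t^{-s}$ (resp. $t\mapsto-\log t$) is monotone, which converts the minimum over lattice images defining $\rho(x,0)$ into a maximum. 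Since $B(x_0,r)$ avoids $0$ in $\T^d$, the lifted ball $B(\tilde x_0,r)$ avoids $\Lambda$, and on this compact ball only finitely many $z\in\Lambda$ can attain the maximum, so there $f$ equals a finite maximum of the smooth functions $h_z(\tilde x):=\sign(s)\lvert\tilde x-z\rvert^{-s}$ (resp. $-\log\lvert\tilde x-z\rvert$). Using the radial Laplacian $\Delta(g(\lvert x\rvert))=g''+\tfrac{d-1}{\lvert x\rvert}g'$ one gets $\Delta h_z=\lvert s\rvert(s-d+2)\lvert\tilde x-z\rvert^{-s-2}$ for $s\neq 0$ and $\Delta h_z=(2-d)\lvert\tilde x-z\rvert^{-2}$ for $s=0$, both of which are $\geq 0$ on $B(\tilde x_0,r)$ exactly when $s\geq d-2$, and $>0$ exactly when $s>d-2$. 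Hence each $h_z$ is classically subharmonic there (strictly so if $s>d-2$), and a finite maximum of classically subharmonic functions is subharmonic; in the strict case, evaluating at the center $\tilde x_0$, where the maximum is attained by a single strictly subharmonic $h_{z_0}$, forces the submean inequality to be strict. Transferring back along the isometry gives $f(x_0)\leq\frac{1}{\sigma(B(x_0,r))}\int_{B(x_0,r)}f\,d\sigma$, with strict inequality when $s>d-2$. As $x_0$ and $r\in(0,\pi)$ were arbitrary, $f$ is (strictly) subharmonic with radius $R=\pi$, and by the translation reduction $K_s$ is entirely (strictly) subharmonic. (Adding the constant that makes $K_s$ nonnegative alters nothing, since constants are harmonic.)

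The only genuinely delicate point is the behaviour of $\rho(\cdot,0)^{-s}$ across the cut locus of $0$ — the boundary of the Voronoi cube $[-\pi,\pi]^d$ — where $\rho(\cdot,0)$ is only Lipschitz and not $C^2$, so a naive ``$\Delta\geq 0$'' argument does not apply directly. Rewriting the Riesz kernel as a maximum of Euclidean Riesz kernels and invoking the elementary closure of subharmonicity under finite maxima is exactly what absorbs this difficulty; the rest is routine radial Laplacian bookkeeping, which also transparently reproduces the threshold $s=d-2$ (and its strict form $s>d-2$) uniformly across the cases $s>0$, $s=0$, and $s<0$.
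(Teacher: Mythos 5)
Your proposal is correct and takes essentially the same route as the paper: both reduce to the Euclidean subharmonicity of $\sign(s)|x|^{-s}$ (resp.\ $-\log|x|$) for $s\ge d-2$, with strictness for $s>d-2$, via lattice representatives — the paper bounds $K_s(x_0,\cdot)$ from below on the ball by the single Euclidean kernel centered at the lattice point realizing $\rho(x_0,x)$, which is exactly the maximizing branch in your max-over-$\Lambda$ representation. The only quibbles are cosmetic: a torus ball of radius $r\in(\pi/2,\pi)$ is not literally isometric to a Euclidean ball (though the set- and measure-level identification you actually use does hold for all $r<\pi$), and at a cut-locus center the maximizing $h_{z_0}$ need not be unique, but your strictness chain only needs some maximizing branch, so nothing breaks.
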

\begin{proof}
    For any $x_0 \in \T^d$, choose $R = 1/2$. Then for any $x \neq x_0$, let $z$ be the point in $\Lambda$ closest to $x-x_0$. Now if we let $u(x) = \begin{cases} - \log |x| & s = 0\\ \sign(s)|x|^{-s} & s \neq 0 \end{cases}$, then for any $B(x,r) \subseteq \T^d \setminus \{x_0\}$
    \begin{align*}
        \frac{1}{\sigma(B(x,r))}\int_{B(x,r)} K_s(x_0,y) d\mu(y) &= \frac{1}{\sigma(B(x,r))}\int_{B(x,r)} u(d(x_0,y))^{-s} d\mu(y)\\
        &\geq \frac{1}{\sigma(B(x,r))}\int_{B(x,r)} u(|y-x_0-z|) d\mu(y)\\
        &\geq u(|x-x_0-z|)\\
        &= K_s(x_0,x)
    \end{align*}
    since $u(x)$ is subharmonic on $\R^d\setminus \{0\}$ for $s \geq d-2$. And $u$ is strictly subharmonic for $s > d-2$, so in this case, the last inequality is strict and $K_s$ is entirely strictly subharmonic.
    \par
\end{proof}
\medskip
\begin{theorem}\label{thm.torrieszequil}
The minimizing measures for Riesz kernels $K_s(x,y)$ on $\T^d$ can be characterized as follows:
    \begin{enumerate}[label=(\alph*)]
        \item If $s < -2$, the only minimizing measures consist of two points a diameter apart:
        $
        \mu = \frac{1}{2}(\delta_{(z_1,z_2,...,z_d)} + \delta_{(-z_1,-z_2,...,-z_d)})
        $
        \item If $s = -2$, minimizing measures are those whose projections for each dimension are points a diameter apart. That is, for each $1 \leq i \leq d$, if we define $\proj_i: \T^d \to \mathbb{S}^1$ by $\proj_i(z_1,...,z_d) = z_i$, the pushforward measure $\mu_i = \proj_i \# \mu$ is $\frac{\delta_z + \delta_{-z}}{2}$ for some $z \in \mathbb{S}^1$.
        \item If $s = d-2$, the uniform surface measure $\sigma$ is a minimizer.
        \item If $d > s > d-2$, the uniform surface measure $\sigma$ is a minimizer, and all minimizers must have full support.
    \end{enumerate}
\end{theorem}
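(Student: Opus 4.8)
Parts (c) and (d) will be immediate from the machinery of Section \ref{sec.equil} together with the discussion opening Section \ref{sec.torus}. By Proposition \ref{prop.torreiszsub} the kernel $K_s$ is entirely subharmonic on $\T^d$ for $s \geq d-2$, and it is regular since every Riesz kernel is. For $s < d$ one checks that $I_{K_s}(\sigma) < \infty$ (in local coordinates the energy integral behaves like $\int_{|t|\leq1}|t|^{-s}\,dt$ on $\R^d$), and by $G$-invariance of $K_s$ and $\sigma$ together with transitivity of the group action, $U_{K_s}^\sigma$ is a finite constant; hence $\sigma$ is approximately $K_s$-invariant. Applying Theorem \ref{thm.kinvar} then shows $\sigma$ is a minimizer whenever $d > s \geq d-2$, giving (c) and the first half of (d); since $K_s$ is moreover entirely strictly subharmonic for $s > d-2$ (Proposition \ref{prop.torreiszsub}), Corollary \ref{thm.fullsup} forces every minimizer to have full support, completing (d).

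For (a) and (b) the subharmonic theory is unavailable, so I would argue directly. Here $s < 0$, so $K_s = -\rho^{-s}$, and since adding a constant does not change the minimizers (Example \ref{ex.RieszEx}) I may replace $K_s$ by $C - \rho^p$ with $p = |s|$; thus minimizing $I_{K_s}$ over $\P(\T^d)$ is equivalent to \emph{maximizing} $J_p(\mu) := \int_{\T^d}\int_{\T^d}\rho(x,y)^p\,d\mu(x)\,d\mu(y)$. For (b), where $p = 2$, the key point is that $\rho(x,y)^2 = \sum_{i=1}^d d(z_i,\zeta_i)^2$ separates, so $J_2(\mu) = \sum_{i=1}^d J^{\mathbb{S}^1}(\proj_i\#\mu)$ with $J^{\mathbb{S}^1}(\nu) = \int\int d(z,\zeta)^2\,d\nu\,d\nu$ on $\mathbb{S}^1$. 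The heart of the matter is then a one-dimensional lemma: among probability measures on $\mathbb{S}^1 = \R/2\pi\Z$, $J^{\mathbb{S}^1}$ is maximized, with value $\pi^2/2$, exactly by the measures $\tfrac12(\delta_z + \delta_{-z})$. I would prove this by Fourier expansion of $g(\theta) := d(0,\theta)^2$ (namely $\theta^2$ on $[-\pi,\pi]$, extended periodically), which is absolutely summable with $\hat g_0 = \pi^2/3$ and $\hat g_k = 2(-1)^k/k^2$, so $J^{\mathbb{S}^1}(\nu) = \sum_k \hat g_k|\hat\nu_k|^2 \leq \tfrac{\pi^2}{3} + \sum_{k\,\mathrm{even},\,k\neq0}\tfrac{2}{k^2} = \tfrac{\pi^2}{2}$ (the odd-$k$ coefficients being negative and $|\hat\nu_k|\leq1$); equality forces $\hat\nu_k = 0$ for $k$ odd and $|\hat\nu_k|=1$ for $k$ even, and then $|\hat\nu_2|=1$ pins $\nu$ to two antipodal points while $\hat\nu_1 = 0$ forces equal weights. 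Summing over $i$ yields $\max_\mu J_2(\mu) = d\pi^2/2$, attained precisely when every marginal $\proj_i\#\mu$ has the stated form (such $\mu$ exist, e.g. product measures), which is exactly (b).

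For (a) I would take $p = |s| > 2$ and use the pointwise bound $\rho^p = \rho^{p-2}\rho^2 \leq \diam(\T^d)^{p-2}\rho^2$, valid since $0 \leq \rho \leq \diam(\T^d) = \pi\sqrt d$ and $p-2 > 0$, with equality precisely when $\rho \in \{0,\diam(\T^d)\}$. Integrating and feeding in the bound from (b),
\[
J_p(\mu) \;\leq\; \diam(\T^d)^{p-2}\,J_2(\mu) \;\leq\; \diam(\T^d)^{p-2}\cdot\tfrac12\diam(\T^d)^2 \;=\; \tfrac12(\pi\sqrt d)^p,
\]
and the two-point measure $\tfrac12(\delta_a + \delta_{a^*})$, with $a^*$ the antipode of $a$, attains this value, hence is a minimizer. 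For the converse, a maximizer must realize equality in both displayed inequalities: equality in the second forces (by (b)) every marginal $\proj_i\#\mu$ to equal $\tfrac12(\delta_{z_i}+\delta_{-z_i})$, so $\supp\mu$ lies in the finite set $\prod_{i=1}^d\{z_i,-z_i\}$; equality in the first then forces any two distinct atoms of $\mu$ to be at distance $\diam(\T^d)$, hence to differ in every coordinate, hence to be mutually antipodal. Since a point has a unique antipode, $\mu$ can have at most two atoms, and the marginal constraint then pins the weights to $\tfrac12$ each, so $\mu = \tfrac12(\delta_a + \delta_{a^*})$ as in (a).

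The parts relying on this paper's framework, (c) and (d), are essentially automatic given Proposition \ref{prop.torreiszsub}, Theorem \ref{thm.kinvar}, and Corollary \ref{thm.fullsup}; the only bookkeeping is finiteness of $I_{K_s}(\sigma)$ and the standing positive-capacity hypotheses, both clear on $\T^d$ for $s < d$. The genuine work is the circle lemma underlying (b), and I expect its equality case --- extracting $\nu = \tfrac12(\delta_z+\delta_{-z})$ from $|\hat\nu_2|=1$ and $\hat\nu_1=0$ --- together with the simultaneous use of the two equality conditions in (a), to be the part demanding the most care.
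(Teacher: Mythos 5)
Your proposal is correct and follows essentially the same route as the paper: parts (c) and (d) via Proposition \ref{prop.torreiszsub}, Theorem \ref{thm.kinvar} and Corollary \ref{thm.fullsup}; part (b) by separating $\rho^2$ across coordinates and reducing to the antipodal-pair result on $\mathbb{S}^1$; and part (a) by comparing the diameter-normalized kernel to the $s=-2$ case and analyzing the two equality conditions exactly as the paper does. The only difference is that where the paper cites \cite{BDM} for the circle lemma, you prove it directly by Fourier expansion of $\theta^2$, which makes that step self-contained but does not change the structure of the argument.
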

\begin{proof}
    Since $I_{K_s}(\sigma) < \infty$ for $s < d$, parts (c) and (d) follow by applying the results of Proposition \ref{prop.torreiszsub} to Theorems \ref{thm.kinvar} and \ref{thm.fullsup}.
    \par
    We next prove part (b). For any $\mu \in \P(\T^d)$, we have
    \begin{align*}
        I_{K_{-2}}(\mu) &= \int_{\T^d} \int_{\T^d} -\rho(z,\zeta)^2 d\mu(z)d\mu(\zeta)\\
        &= -\int_{\T^d} \int_{\T^d} d(z_1,\zeta_1)^2 + d(z_2,\zeta_2)^2 + ... + d(z_d,\zeta_d)^2 d\mu(z)d\mu(\zeta)\\
        %&= \int_{\T^d} \int_{\T^d} -2d(z_1,\zeta_1)^2 d\mu(z) d\mu(\zeta) + ... + \int_{\T^d} \int_{\T^d} -2d(z_d,\zeta_d)^2 d\mu(z) d\mu(\zeta)\\
        &= \int_{\mathbb{S}^1} \int_{\mathbb{S}^1} -d(z_1,\zeta_1)^2 d\mu_1(z_1) d\mu_1(\zeta_d) + ... + \int_{\mathbb{S}^1} \int_{\mathbb{S}^1} -d(z_d,\zeta_d)^2 d\mu_d(z_d) d\mu_d(\zeta_d).
    \end{align*}
    So for each $1 \leq i \leq d$, $\mu_i$ must be a minimizing measure of the geodesic Riesz energy squared on $\mathbb{S}^1$. This problem has been solved\cite{BDM}, with all minimizers being of the form $\mu_i = \frac{\delta_z + \delta_{-z}}{2}$.
    \par
    For part (a), we note that any $\mu = \frac{1}{2}(\delta_{(z_1,z_2,...,z_d)} + \delta_{(-z_1,-z_2,...,-z_d)})$ is a minimizer for $K_{-2}$. Now, for $s < 0$, if we define $K_s'(x,y) = -\left(\frac{\rho(x,y)}{\diam(\T^d)}\right)^{-s}$, we see that $K_s$ and $K_s'$ have the same minimizing measures, and that if $-r > -s$,
    \[
        K_{-r}'(x,y)  \geq K_{-s}'(x,y)
    \]
    with equality only when $x = y$ or $\rho(x,y) = \diam(\T^d)$. Thus, for any $\nu \in \P(\T^d)$, 
    \[
        I_{K_{-r}'}(\nu) \geq I_{K_{-2}'}(\nu) \geq I_{K_{-2}'}(\mu).
    \]
    So $\mu$ is a minimizer for $K_{-r}$. Next, we show that to achieve equality, we must have $\nu = \frac{1}{2}(\delta_{(\zeta_1,\zeta_2,...,\zeta_d)} + \delta_{(-\zeta_1,-\zeta_2,...,-\zeta_d)})$
    \par
    Since points in $\T^d$ on have one point a diameter away, to satisfy the first equality, any minimizing measure for $K_{-r}$ must be of the form $\omega \delta_{(\zeta_1,...,\zeta_d)} + (1-\omega) \delta_{(-\zeta_1,...,-\zeta_d)}$. 
    But to satisfy the second equality, we must have $\omega = \frac{1}{2}$.
    An example of minimizing measures for $s=-2$ and $s\leq -2$ is shown below.
    \begin{figure}[h!]
        \centering
        \includegraphics[width=0.25\linewidth]{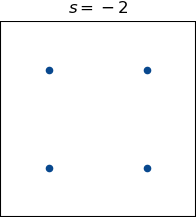}
        \includegraphics[width=0.25\linewidth]{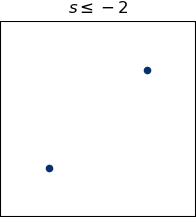}
        \label{fig:equil_measures}
        \caption{Minimizers for $s = -2$ and $s \leq -2$}
    \end{figure}
    \\
\end{proof}
This gives a solution of the energy minimization problem for the Riesz energy on the torus, when $s \in (-\infty,-2] \cup [2-d,d)$. But this technique can also be used for more general kernels.
\medskip
\begin{proposition}\label{prop.torgenequil}
    Let $f:[0,\pi]^d \to [0,\infty]$ be a function satisfy the following properties:
    \begin{enumerate}[label=(\alph*)]
        \item $f$ is continuous in the extended sense
        \item $f$ is subharmonic (and thus finite) on $[0,\pi]^d \setminus \{0\}$
        \item For each $1 \leq i \leq d$, there exists $r > 0$ such that $f(x_1,x_2,...,x_i,...,x_d) > f(x_1,x_2,...,\pi,...,x_d)$ for all $x \in [0,\pi]^d$ with $|\pi-x_i| < r$.
        \item If $f(0) = \infty$, then $f$ is Riesz equivalent around $0$. That is, there exists $c_1,c_2,r>0,s\geq 0$ such that $c_1K_s(0,y) \leq f(y) \leq c_2K_s(0,y)$ for $y \in B(0,r)$.
    \end{enumerate}
    Then if $K(x,y) = f(d(x_1,y_1),...,d(x_d,y_d))$, the uniform surface measure $\sigma$ is a minimizer for $K$ on $\T^d$.
\end{proposition}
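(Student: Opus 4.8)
The plan is to derive the statement from Theorem~\ref{thm.kinvar}: I will check that $K$ is regular and entirely subharmonic and that $\sigma$ is approximately $K$-invariant, so that $\sigma$ is forced to be a minimizer. Lift to $(\R/2\pi\Z)^d$ and write $K(x,y)=F(x-y)$, where $F(t)=f(d(t_1,0),\dots,d(t_d,0))$ is the $(2\pi\Z)^d$-periodic, coordinatewise-even reflection of $f$ (so on $[-\pi,\pi]^d$ each coordinate of the inner map is simply $|t_i|$). Since $K$ depends only on $x-y$ and $\sigma$ is translation invariant, $U_K^\sigma(x)=\int_{\T^d}F(x-y)\,d\sigma(y)$ does not depend on $x$; together with the standing hypothesis $I_K(\sigma)<\infty$ of this section, $\sigma$ is $K$-invariant, in particular approximately $K$-invariant. (The standing capacity assumptions hold here: $c_K(\T^d)>0$ since $I_K(\sigma)<\infty$, and every small ball is bi-Lipschitz to a Euclidean ball on which, by (d), $K$ is comparable to a Riesz kernel of exponent $<d$.)

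Regularity is easy. If $f(0)<\infty$, then $f$ is real valued and continuous by (a), so $K$ is bounded and continuous and regularity holds trivially. If $f(0)=\infty$, condition (d) gives $c_1K_s\le f\le c_2K_s$ on a neighbourhood of $0$ in $[0,\pi]^d$; since the Euclidean norm of $(d(x_1,y_1),\dots,d(x_d,y_d))$ equals $\rho(x,y)$, this becomes $c_1\rho(x,y)^{-s}\le K(x,y)\le c_2\rho(x,y)^{-s}$ for $\rho(x,y)$ small, with $s<d$ because $I_K(\sigma)<\infty$ forces $I_{K_s}(\sigma)<\infty$. The proof that the Riesz kernels $K_s$ are regular then carries over with only cosmetic changes: the bound $K_s(x,y)\le 2^sK_s(x',y)$ for $x'$ a nearest point of $\supp\mu$ is replaced by $K(x,y)\le(c_2/c_1)\,2^s\,K(x',y)$, and the $s=0$ case is handled exactly as there.

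The heart of the argument is entire subharmonicity, with a radius $R$ independent of the second variable; by translation invariance it suffices to produce one $R$ (take $R=\pi/2$) for which $F$ is finite and subharmonic on $\T^d\setminus\{0\}$, and the idea follows the proof of Proposition~\ref{prop.torreiszsub}. Finiteness of $F$ off $0$ is part of (b). For the submean inequality at $t^0\neq 0$, lift to $\R^d$; on a small ball $B(t^0,r)$ the $i$-th coordinate of $t\mapsto(d(t_1,0),\dots)$ is one of: affine of slope $\pm1$; a ``valley'' $|t_i-m_i|$ (when $t^0_i\equiv 0$); or a ``peak'' $\pi-|t_i-a_i|$ (when $t^0_i\equiv\pi$). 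Applying condition (c) to the peak coordinates one at a time produces a minorant $h_0\le F$ on $B(t^0,r)$ with $h_0(t^0)=F(t^0)$ that no longer depends on the peak coordinates. If every coordinate is a peak, $h_0$ is the constant $f(\pi,\dots,\pi)<\infty$ and the submean inequality is trivial; otherwise $h_0$ is, near $t^0$, $f$ (with its peak coordinates frozen at $\pi$) composed with a map that is an isometry in the affine directions and an even reflection across $\{t_j=m_j\}$ in the valley directions, and (b) makes this subharmonic. Then $F(t^0)=h_0(t^0)\le\frac{1}{\sigma(B(t^0,r))}\int_{B(t^0,r)}h_0\,d\sigma\le\frac{1}{\sigma(B(t^0,r))}\int_{B(t^0,r)}F\,d\sigma$, and Theorem~\ref{thm.kinvar} completes the proof.

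I expect the main obstacle to be exactly that submean inequality for $F$ along the fold locus, i.e.\ where some coordinate is $\equiv 0$ or $\equiv\pi$ modulo $2\pi$; this is a reflection-principle gluing. The $\equiv\pi$ part is clean, since (c) supplies a genuine subharmonic minorant touching $F$ at the centre; the delicate part is the $\equiv 0$ faces, where one must pin down the precise sense of hypothesis (b), namely that the coordinatewise-even reflection of $f$ is subharmonic across each such face (and in the interior), so that the ``peak-frozen'' slice of $f$ inherits subharmonicity and the reflected function genuinely satisfies the submean inequality there. By contrast, the uniformity of $R$ in the second variable costs nothing, being immediate from translation invariance.
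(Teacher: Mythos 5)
Your overall route is the same as the paper's: condition (d) gives regularity by comparison with a Riesz kernel, conditions (a)--(c) are meant to give entire subharmonicity of $K$, and then Theorem~\ref{thm.kinvar} together with the translation invariance of $\sigma$ (so $U_K^\sigma$ is constant and $I_K(\sigma)<\infty$) forces $\sigma$ to be a minimizer. The regularity and $K$-invariance parts of your write-up are fine and essentially what the paper intends. The problem is the step you yourself call the heart of the argument: the submean inequality for $F$ on balls meeting the folds.

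Your treatment of the ``peak'' folds $t_i\equiv\pi$ contains a genuine error. You freeze the peak coordinates at $\pi$ to get a minorant $h_0\le F$ with $h_0(t^0)=F(t^0)$, and then claim ``(b) makes this subharmonic.'' But hypothesis (b) is subharmonicity of $f$ as a function of all $d$ variables (even granting truncated balls up to the faces); it does \emph{not} imply that the slice $x'\mapsto f(x',\pi)$, extended constantly in the frozen direction, is subharmonic: restrictions of subharmonic functions to hyperplane slices need not be subharmonic (e.g.\ $y^2-x^2$ restricted to $y=0$). In fact your claim already fails for the motivating example of Proposition~\ref{prop.torreiszsub}: take $d\ge 3$, $f(x)=|x|^{-(d-2)}$, which satisfies (a)--(d), and $t^0=(0,\dots,0,\pi)$. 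Your minorant is $h_0(t)=\bigl(t_1^2+\cdots+t_{d-1}^2+\pi^2\bigr)^{-(d-2)/2}$, whose Laplacian at $t'=0$ equals $-(d-2)(d-1)\pi^{-d}<0$; being smooth and strictly superharmonic there, it violates the submean inequality on small balls, so the first inequality in your chain $F(t^0)=h_0(t^0)\le\frac{1}{\sigma(B)}\int_B h_0\,d\sigma$ is false. (The conclusion for $F$ is still true in that example, but via the paper's Riesz argument, which uses the \emph{full} Euclidean kernel centered at the nearest lattice copy together with radial monotonicity of $u$ --- a device unavailable for general $f$.) The ``valley'' folds $t_i\equiv 0$ you leave unresolved by declaring that (b) should be read as saying the even reflection is subharmonic across those faces; but that is exactly the statement requiring proof (interior subharmonicity plus submean over truncated balls at face points must be shown to yield the submean inequality for balls centered \emph{near} but not on the fold), and the same reflection issue is what you would need, with (c) supplying the sign of the crease, at the $\pi$-folds in place of the frozen-slice minorant. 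So as written the proposal does not establish entire subharmonicity of $K$; to be fair, the paper's own proof asserts this step in one sentence without supplying the reflection lemma either, but your specific argument for it would fail.
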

\begin{proof} The first three conditions tell us that $K$ is entirely subharmonic on $\T^d$. And the last condition tells us that $K$ is equivalent to a Riesz kernel, which implies $K$ is regular. Thus, we can apply Theorem \ref{thm.kinvar} to see that $\sigma$ is a minimizing measure.
\end{proof}
Similar energy minimization problems have been solved by showing that the expansion of the kernel into harmonic polynomials has nonnegative coefficients, and then finding a method to show that the convergence of these polynomials implies the positive definiteness of the kernel.
\par
We have approached this problem from an entirely different direction, focusing instead on the properties of the potential of the minimizing measure. With our energy minimization results, we are able to "work backwards" and show that the coefficients of the expansion are nonnegative.
\medskip
\begin{remark}
    Another compact homogeneous manifold for which the Riesz energy problem has been studied is the sphere $\mathbb{S}^d$ with its geodesic distance. This provides a counterexample to the converse of Theorem \ref{thm.kinvar}, showing that a $K$-invariant minimizing measure does not imply subharmonicity. 
    \par
    Our example occurs when we take the Riesz kernel $K_{-1}$ on $\mathbb{S}^2$. It is known that the uniform surface measure $\sigma$ is a minimizer \cite{BDM}. But $K_{-1}$ is not entirely subharmonic. 
    For example, if we take $x,y \in \mathbb{S}^2$ such that $\rho(x,y) = \frac{\pi}{4}$, then $K(\cdot,y)$ will not satisfy the submean value inequality at $x$. This can be verified using the spherical law of cosines and the Funk-Hecke formula \cite[Lemma A.5.2]{DX}.
    \par
    The intuition for this can be seen in Figure \ref{fig:sphereical_caps}. We consider Earth, where we take $y$ to be the north pole, and $x$ to be a point in the northern hemisphere. Since lines of latitude are not great circles, the area in the cap which is north of $x$ is smaller than the area which is south of $x$.
    \begin{figure}[h!]
        \centering
        \includegraphics[width=4cm]{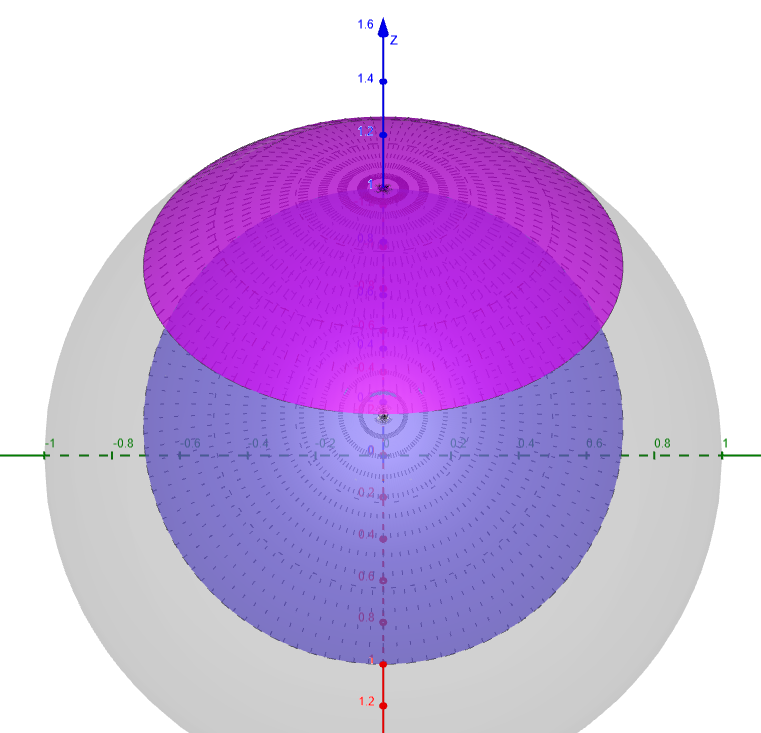}
        \caption{The overlap of spherical caps on $\mathbb{S}^2$}
        \label{fig:sphereical_caps}
    \end{figure}
\end{remark}

\subsection{Multivariate Fourier Series} By applying Proposition \ref{cor.subposdef} to the kernels in Proposition \ref{prop.torgenequil}, we see that these kernels are positive definite. We now consider the expansion of these kernels into harmonic polynomials.

On the torus $\T^d$, consider the multi-index $(\alpha_1,...,\alpha_d)$, where each integer $\alpha_i \geq 0$, and $\sum_{i=1}^d \alpha_i = n$. Then from \cite{LR} we have
\[
    Q_{n,(\alpha_1,...,\alpha_d)}((x_1,..,x_d),(y_1,...,y_d)) = \prod_{i=1}^d \cos(\alpha_i(x_i-y_i)).
\]
Since the harmonic expansion must have non negative coefficients when $\sigma$ is the minimizer,
\medskip
\begin{theorem}
    Let $f:[0,\pi]^d \to [0,\infty]$ satisfy the conditions of Proposition \ref{prop.torgenequil}. Then if all $n_i$ are nonnegative integers, the multivariate cosine Fourier coefficients are nonnegative. In other words,
    \[
     \int_{[0,\pi]^d} f(x_1,x_2,...,x_d) \cos(n_1 x_1)\cos(n_2x_2)...\cos(n_dx_d) dx_1dx_2...dx_d \geq 0.
    \]
    .
\end{theorem}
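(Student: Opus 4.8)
The plan is to chain together the results of the preceding subsections. Under the hypotheses on $f$, Proposition~\ref{prop.torgenequil} tells us that the $G$-invariant kernel $K(x,y)=f\bigl(d(x_1,y_1),\dots,d(x_d,y_d)\bigr)$ on $\T^d$ is regular and entirely subharmonic and that the uniform surface measure $\sigma$ is a minimizer; Corollary~\ref{cor.subposdef} then gives that $K$ is positive definite; and Proposition~\ref{prop.poscoeffs} converts this into the statement that every coefficient $a_{n,k}$ in the expansion of $K$ into the reproducing kernels $Q_{n,k}$ satisfies $a_{n,k}\geq0$. So the entire argument reduces to recognizing the integral in the theorem as a positive multiple of one of these coefficients.

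Concretely, on $\T^d$ the $G$-invariant pieces are indexed by multi-indices $(\alpha_1,\dots,\alpha_d)$ of nonnegative integers, with $Q_{n,(\alpha_1,\dots,\alpha_d)}(x,y)=\prod_{i=1}^d\cos\bigl(\alpha_i(x_i-y_i)\bigr)$ and $n=\sum_i\alpha_i$, by \cite{LR}. Given nonnegative integers $n_1,\dots,n_d$ as in the statement, I would take $\alpha_i=n_i$ and use the formula $a_{n,k}=\frac{1}{d_{n,k}}\int_{\T^d}K(x,y)Q_{n,k}(x,y)\,d\sigma(y)$; since $K$, $\sigma$, and $Q_{n,k}$ are $G$-invariant, this is independent of $x$, so I evaluate it at $x=0$, where $\cos(\alpha_i(0-y_i))=\cos(\alpha_i y_i)$. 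Passing to the model $\T^d=(\R/2\pi\Z)^d$ with $d\sigma=(2\pi)^{-d}\prod_i dy_i$ on the fundamental domain $[-\pi,\pi]^d$, we have $d(0,y_i)=|y_i|$, and since cosine is even each integrand factor depends only on $|y_i|$; folding each coordinate onto $[0,\pi]$ contributes a factor $2^d$ and yields
\[
a_{n,(\alpha_1,\dots,\alpha_d)}=\frac{2^d}{(2\pi)^d\,d_{n,k}}\int_{[0,\pi]^d}f(t_1,\dots,t_d)\prod_{i=1}^d\cos(\alpha_i t_i)\,dt_1\cdots dt_d .
\]
Since $d_{n,k}$ is a positive integer, the constant in front is strictly positive, so $a_{n,(\alpha_1,\dots,\alpha_d)}\geq0$ forces the displayed integral to be nonnegative, which is exactly the claim.

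Two points need attention to keep this honest. First, the coefficient integral must be well defined: because $\sigma$ is $G$-invariant, $U_K^\sigma$ is constant, hence equal to $I_K(\sigma)$, which is finite by the standing assumption of this section; thus $K(0,\cdot)\in L^1(\sigma)$, the integral above converges absolutely, and the singularity of $f$ at the origin (a Riesz-type singularity by condition (d) of Proposition~\ref{prop.torgenequil}) causes no problem — importantly, we never invoke pointwise convergence of the harmonic expansion, which only holds in $L^2$. Second, one must extract from \cite{LR} the precise form of $Q_{n,(\alpha_1,\dots,\alpha_d)}$ together with the normalization of the orthonormal basis entering $d_{n,k}$; this bookkeeping, rather than any conceptual step, is where I expect the only real friction, since all the substance has already been absorbed into Proposition~\ref{prop.torgenequil}, Corollary~\ref{cor.subposdef}, and Proposition~\ref{prop.poscoeffs}.
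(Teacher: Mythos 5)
Your proposal is correct and follows essentially the same route as the paper, which proves the theorem in one line by chaining Proposition~\ref{prop.torgenequil}, Corollary~\ref{cor.subposdef}, and Proposition~\ref{prop.poscoeffs}; you simply spell out the coefficient identification (evaluating $a_{n,(\alpha_1,\dots,\alpha_d)}$ at $x=0$ and folding $[-\pi,\pi]^d$ onto $[0,\pi]^d$) that the paper leaves implicit, and your attention to absolute convergence of the coefficient integral is a sound, if minor, addition.
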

This is clear from applying Corollary \ref{cor.subposdef} and Proposition \ref{prop.poscoeffs} to the kernels in Proposition \ref{prop.torgenequil}.

It is well known that one dimensional cosine Fourier coefficients are nonnegative for convex functions. And while this property does not hold in general for convex functions of more than one variable, it has been extended to Popoviciu convex functions of two variables\cite{NR}. Here, we have a different generalization of convexity: subharmonic functions.

\section*{Declarations}
\subsection*{Funding}
    No funding was received to assist with the preparation of this manuscript.
\subsection*{Competing Interests}
    The authors have no relevant financial or non-financial interests to disclose.
\subsection*{Data Availability}
    The authors do not have any research data outside the submitted manuscript file.

\end{document}